\newtheorem{theorem}{Theorem}[section]
\newtheorem{lemma}[theorem]{Lemma}
\theoremstyle{definition}
\newtheorem{definition}[theorem]{Definition}
\newtheorem{example}[theorem]{Example}
\newtheorem{proposition}[theorem]{Proposition}
\theoremstyle{remark}
\newtheorem{remark}[theorem]{Remark}
\numberwithin{equation}{section}
\begin{document}

\title{Almost global homotopy theory}



\author{Zhen Huan}

\address{Zhen Huan, Department of Mathematics,
Sun Yat-sen University, Guangzhou, 510275 China} \curraddr{}
\email{huanzhen84@yahoo.com}
\thanks{The author was partially supported by NSF grant DMS-1406121.}


\subjclass[2010]{Primary 55}

\date{}

\begin{abstract}

In this paper we develop the definition of a global orthogonal spectrum and its unitary version. It relates $G-$equivariant spectra by equivariant weak equivalence in a coherent way. This category of global spectra has a model structure Quillen equivalent to the global model structure on orthogonal spectra. We also show that there is a large family of equivariant cohomology theories, including quasi-elliptic cohomology, that can be globalized in the new context. Starting from one global ring spectrum, we can construct infinitely many distinct global ring spectra. Moreover, in light of the results in this paper, we ask whether we have the conjecture that the globalness of a cohomology theory is completely determined by the formal component of its divisible group and when the $\acute{e}$tale component of it varies the globalness does not change.
\end{abstract}

\maketitle


\section{Introduction}





Equivariant stable homotopy theory is the homotopy theory of topological spaces with group actions. Invented by G. B. Segal in the early 1970s \cite{SegalEquiv}, equivariant
stable homotopy theory was motivated by Atiyah and Segal's work \cite{KC} on equivariant K-theory. The foundation of it was established systematically by tom Dieck, Segal and May afterwards. During the last decades this area has been very active. Equivariant stable homotopy theory has shown great power solving computational and conceptual problems in algebraic and geometric topology. In addition, even early in the history of equivariant stable homotopy theory, people noticed that many equivariant homotopy theories exist not only for one
particular group, but for a family of groups in a uniform way and  as equivariant K-theories, there are relations between  equivariant homotopy theories for different groups, such as the connections embodied in the restriction maps, transfer maps, etc. 
These observations led to the birth of global homotopy theory. Globalness is a measure of the naturalness of a cohomology theory.
Prominent examples of global homotopy theory include global K-theory, global bordism theory and global stable homotopy theory.

Orthogonal spectrum is a good model to interpret global spectrum, which reflects most symmetry. The idea of global orthogonal spectra was first inspired in
Greenlees and May \cite{GP}.
Several models of
global homotopy theory have been established, which are equivalent to each other. Schwede develops a modern approach to global homotopy theory in \cite{SS}. In \cite{BG}
Bohmann uses the framework of enriched indexed categories to develop the definition of a global orthogonal spectrum. She shows that the Atiyah-Bott-Shapiro orientation can extend
to this global context. Gepner and Henriques establish the basic theory of an unstable global homotopy theory in \cite{Gepnerorbi} via infinity categories. The underlying infinity category is equivalent to that of Schwede's orthogonal spaces. This theory is much easier to work with for elliptic cohomology. 

The motivating example for almost global homotopy theory is quasi-elliptic cohomology \cite{Rez11}\cite{Huanthesis}\cite{Huansurvey}. This theory is a variant of
elliptic cohomology theories. It is the orbifold K-theory of a space of constant
loops. For global quotient orbifolds, it can be expressed in terms
of equivariant K-theories and has the change-of-group isomorphisms. In a conversation, Ganter indicated
that it has better chances than Grojnowski equivariant elliptic
cohomology theory to be put together naturally in a uniform way
and made into an ultra-commutative global cohomology theory in the
sense of Schwede \cite{SS}. However, though many constructions on quasi-elliptic cohomology can be made in a neat way,
there is not an effective way to prove or disprove this statement.
We construct in \cite{HuanSpec} an orthogonal $G-$spectrum of quasi-elliptic cohomology for each compact Lie group $G$. As indicated in
Remark 6.13 \cite{HuanSpec}, these equivariant orthogonal spectra cannot arise from an orthogonal spectrum.
Then it is even more difficult to see whether each elliptic cohomology theory,
whose form is more intricate and mysterious than quasi-elliptic cohomology, can be globalized in the current setting.

In this paper we construct a new theory, almost global homotopy theory. We observe that if the equivariant cohomology theories $\{E_G^*(-)\}_G$ have the change-of-group isomorphisms and there exists
an orthogonal $G-$spectrum  $\{X_G(-), \sigma_{(-,-)}\}$ representing $E_G^*(-)$, then for any closed subgroup $H$ of $G$ and any $G-$representation $V$,
we have a weak $H-$equivalence $X_G(V)\simeq_H X_H(V)$. In light of this observation, we combine the equivariant orthogonal spectra of $\{E_G^*(-)\}_G$ by equivariant weak equivalences to give the definition of a global spectrum. 
The category $Sp_W^{D_0}$ [Definition \ref{spd0w}] of the resulting global spectra has a model structure on it Quillen equivalent to the global model structure on orthogonal spectra constructed in
Theorem 4.3.17 \cite{SS}.

Other than quasi-elliptic cohomology, there is a large family of equivariant cohomology theories that can extend to the new context of global spectra.
In \cite{Huanquasi}
we define quasi-theories. 
They are motivated by quasi-elliptic cohomology whose divisible group is given by the Tate curve $Tate(q)$ [Section 2.6,
\cite{AHS}]. Starting from a cohomology theory $E$, we can construct a family of theories $QE_{n, G}^*(-)$ for any positive integer $n$ and any compact Lie group $G$.
Quasi-elliptic cohomology, the $n-$th generalized quasi-elliptic cohomology and the theories $QE^*(-)$ in \cite{HuanSpec} are all quasi-theories.

Quasi-theories $\{QE_{n, G}^*(-)\}_G$ have the change-of-group isomorphism if the initial theory $\{E^*_G(-)\}_G$ does, as shown in \cite{Huanquasi}. In \cite{Huanquasisp} we construct an orthogonal $G-$spectrum for each quasi-theory. The construction generalizes that for quasi-elliptic cohomology in \cite{HuanSpec}. The equivariant orthogonal spectra for each quasi-theory cannot arise from an orthogonal spectrum, but do form a global spectra in the almost global homotopy theory.

In this paper we construct almost global homotopy theory and several model structures on the underlying category. In addition, we construct global spectra for quasi-theories
in the new setting. 
In Section \ref{newdiagram} we construct the new diagram category $D_0$ by adding restriction maps between group representations to the category $\mathbb{L}$ of
inner product vector spaces. 
In Section \ref{d0tw} we introduce the category $Sp_W^{D_0}$ of global spectra, which is a full subcategory of the category of $D_0-$spectra.
Moreover, we introduce the unitary version of it.
In Section \ref{modelgl} we describe the global model structure on $Sp^{D_0}_W$ and show it is Quillen equivalent to
the global model structure on orthogonal spectra
in Theorem 4.3.18 \cite{SS}.
In Section \ref{dfspmodel}, for any global family $\mathcal{F}$, we introduce a category $Sp_W^{D_0^{\mathcal{F}}}$ of global spectra and show that if $\mathcal{F}$
is the global family of finite groups there is a model structure on $Sp_W^{D_0^{\mathcal{F}}}$ Quillen equivalent to the $\mathcal{F}-$global model structure on orthogonal spectra
in Theorem 4.3.17 \cite{SS}. In addition, we describe roughly the Reedy model structure on $Sp_W^{D_0^{\mathcal{F}}}$ constructed in Section 6.4 \cite{Huanthesis}.
In Section \ref{examples} we show quasi-theories have global version in this new context.
At the end of the paper, in Remark \ref{etaless}, we raise the question whether the $\acute{e}$tale component in the divisible group associated to a cohomology theory plays
no role in the globalization of the theory.

\subsection{Acknowledgement}
First I would like to thank my PhD advisor Charles Rezk. The main idea of this paper is motivated by him.
I would also like to thank Matthew Ando for his support and encouragement  all the time. I would like to thank Stefan Schwede for helpful conversation at Homotopy Theory Summer Berlin
in June 2018 and a few email discussions. I would like to thank David Gepner and Nathaniel Stapleton for helpful conversations.

\section{The categories $D_0$ and $D_0^{\mathbb{C}}$}\label{newdiagram}

In this section we introduce the new diagrams $D_0$ and $D_0^{\mathbb{C}}$.
\subsection{The category $D_0$}

\begin{definition}
Let $D$ be a category with objects $(G, V,\rho)$ where $V$ is an
inner product vector space, $G$ a compact group $G$ and $\rho$ a
faithful group representations
$$\rho: G\longrightarrow O(V).$$ A morphism in $D$ $$\phi=(\phi_1, \phi_2): (G, V,
\rho)\longrightarrow (H, W, \tau)$$ consists of a linear isometric
embedding $\phi_2: V\longrightarrow W$ and a group homomorphism
$\phi_1: \tau^{-1}(O(\phi_2(V)))\longrightarrow G$, which
make the diagram commute. \begin{equation}\xymatrix{G\ar[r]^{\rho}
&O(V)\ar[d]^{\phi_{2*}}\\
\tau^{-1}(O(\phi_2(V)))\ar[u]^{\phi_1}\ar[r]^>>>>>{\tau}
&O(W)}\label{Dmor}\end{equation} In other words, the group action
of $H$ on $\phi_2(V)$ is induced from that of $G$.

The composition of two morphisms
$$(G, V, \rho)\buildrel{(\phi_1, \phi_2)}\over\longrightarrow (H, W, \tau)\buildrel{(\psi_1, \psi_2)}\over\longrightarrow (K, U,
\beta)$$ is defined to be
$$(\phi_1\circ\psi_1|_{\beta^{-1}(O(\psi_2\circ\phi_2(V)))},
\psi_2\circ\phi_2).$$
The composition is associative.
The identity morphism in $D((G, V, \rho),
(G, V, \rho))$ is $(Id, Id)$.

\end{definition}
All the maps in (\ref{Dmor}) are injective. Given a linear
isometric embedding $\phi_2: V\longrightarrow W$,
$\tau^{-1}(\phi_{2*}(\rho(G)))$ is always nonempty since the
identity element is in it.  If $\phi_1$ exists, it is unique.

\begin{lemma}Two objects $(G, V, \rho)$ and $(H, W, \tau)$ in $D$ are
isomorphic if and only if  there is an isomorphism
$G\longrightarrow H$ which makes $V$ and $W$ isomorphic as
representations. \label{isoobd}
\end{lemma}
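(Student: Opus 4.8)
The plan is to prove both directions by carefully unwinding what a morphism in $D$ looks like when the underlying linear isometry is an isomorphism. First I would handle the easy direction: suppose $\alpha\colon G\to H$ is a group isomorphism and $\phi_2\colon V\to W$ is a linear isometric isomorphism intertwining $\rho$ and $\tau$ in the sense that $\tau(\alpha(g))=\phi_{2*}(\rho(g))$ for all $g\in G$ (this is precisely what ``$V$ and $W$ isomorphic as representations via $\alpha$'' means). Since $\phi_2$ is surjective, $\phi_2(V)=W$, so $O(\phi_2(V))=O(W)$ and $\tau^{-1}(O(\phi_2(V)))=H$; thus the candidate morphism $(G,V,\rho)\to(H,W,\tau)$ requires a homomorphism $\phi_1\colon H\to G$ making (\ref{Dmor}) commute, and $\phi_1=\alpha^{-1}$ does the job by the intertwining relation. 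Running the same construction with $\alpha^{-1}$ and $\phi_2^{-1}$ produces a morphism in the other direction, and one checks using uniqueness of $\phi_1$ (noted after the definition) and the composition formula that the two are mutually inverse, so $(G,V,\rho)\cong(H,W,\tau)$.

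For the converse, suppose $(\phi_1,\phi_2)\colon(G,V,\rho)\to(H,W,\tau)$ and $(\psi_1,\psi_2)\colon(H,W,\tau)\to(G,V,\rho)$ are mutually inverse in $D$. Then $\psi_2\circ\phi_2=\mathrm{Id}_V$ and $\phi_2\circ\psi_2=\mathrm{Id}_W$, so $\phi_2\colon V\to W$ is a linear isometric isomorphism; in particular $\phi_2(V)=W$, hence $\tau^{-1}(O(\phi_2(V)))=H$ and $\phi_1$ is a homomorphism $H\to G$. Symmetrically $\psi_1\colon G\to H$. The composition condition $(\phi_1,\phi_2)\circ(\psi_1,\psi_2)=\mathrm{Id}_{(H,W,\tau)}$ unpacks (via the composition formula, with all domains equal to the full groups because $\phi_2,\psi_2$ are surjective) to $\phi_1\circ\psi_1=\mathrm{Id}_H$, wait --- one must be careful about variance: the morphism goes $(G,V)\to(H,W)$ on the space level but $H\to G$ on the group level, so composability forces $\psi_1\circ\phi_1=\mathrm{Id}_G$ and $\phi_1\circ\psi_1=\mathrm{Id}_H$. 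Hence $\phi_1\colon H\to G$ is a group isomorphism, and its inverse $\psi_1\colon G\to H$ is the desired $\alpha$. Finally, commutativity of (\ref{Dmor}) for $(\phi_1,\phi_2)$ reads $\tau=\phi_{2*}\circ\rho\circ\phi_1$ on $H$; precomposing with $\psi_1=\phi_1^{-1}$ gives $\tau\circ\phi_1^{-1}=\phi_{2*}\circ\rho$, i.e. $\phi_2$ intertwines $\rho$ with $\tau$ along the isomorphism $\psi_1=\phi_1^{-1}\colon G\to H$, so $V$ and $W$ are isomorphic as representations.

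The only genuinely delicate point --- and the step I would present most carefully --- is the bookkeeping of the contravariant group component against the covariant space component in the composition law: one has to make sure that ``mutually inverse morphisms in $D$'' really does force $\phi_2$ to be an honest isometric isomorphism rather than merely a split mono, and that the restriction appearing in the composition formula $\psi_1\circ\phi_1|_{\tau^{-1}(O(\psi_2\circ\phi_2(V)))}$ is vacuous here precisely because $\phi_2$ is surjective. Everything else is routine diagram-chasing together with the already-noted facts that $\phi_1$ is uniquely determined by $\phi_2$ and that the preimage group is always nonempty. I do not expect any analytic or topological subtlety; compactness of the groups plays no role in this particular lemma.
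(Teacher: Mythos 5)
Your proof is correct; the paper in fact gives no proof of this lemma (it is ``left to the readers''), and your argument is precisely the intended unwinding of the morphism definition in $D$. One bookkeeping slip worth fixing: since $\phi_1\colon H\to G$ and $\psi_1\colon G\to H$, the composite $(G,V,\rho)\to(H,W,\tau)\to(G,V,\rho)$ has group component $\phi_1\circ\psi_1\colon G\to G$, so the identities should read $\phi_1\circ\psi_1=\mathrm{Id}_G$ and $\psi_1\circ\phi_1=\mathrm{Id}_H$ (the reverse of what you wrote after your self-correction), though this does not affect the conclusion that $\phi_1$ and $\psi_1$ are mutually inverse isomorphisms.
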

The proof is left to the readers.

\begin{remark}This category $D$ has the same objects as the category $\mathcal{I}_{\mathcal{G}}$ defined in Example 2.3, \cite{BG}, from which Bohmann constructed
a version of global spectra. However, the sets of morphisms in the two categories are very different. Roughly speaking, $D$ has the same restriction maps as $\mathcal{I}_{\mathcal{G}}$
and less linear isometric embeddings than  $\mathcal{I}_{\mathcal{G}}$. In addition, there is no obvious structure of enriched indexed category on $D$.
\end{remark}

\begin{remark}
Since the representation $\rho$ in an object $(G, V, \rho)$ of $D$
is faithful, we may consider the group $G$ as a closed subgroup of
$O(V)$. Then the diagram (\ref{Dmor}) is equivalent to

\begin{equation}\xymatrix{G\ar[r] &O(V)\ar[d] \\ H\cap O(V)\ar[u] \ar[r] &O(W)}\label{simplifyD}\end{equation}
where we consider $O(V)$ as a closed subgroup of $O(W)$ as well.
All the maps in the diagram are inclusions.

\end{remark}

In Section 6.1 \cite{Huanthesis}, we discuss the topology on the space $D((G, V, \rho), (H, W, \tau))$ of morphisms.
In addition, the space $D((G, V, \rho), (H, W, \tau))$ of morphisms inherits an
$H-$action and $G-$action on it. For
$\phi=(\phi_1,\phi_2): (G, V, \rho)\longrightarrow (H, W, \tau)$
and $(g, h)\in G\times H$,

\begin{equation}(g, h)\cdot (\phi_1,\phi_2) = (g\phi_1(h^{-1}-h)g^{-1},
h\phi_2(g^{-1}\cdot -))\label{baction}\end{equation} The
$H-$action on $\phi_2$ is left and the $G-$action on it is right,
whereas the $H-$action on $\phi_1$ is right and the $G-$action on
$\phi_1$ is left.

\begin{proposition}
The category $D$ is a symmetric monoidal category.
\label{d0symmetric}
\end{proposition}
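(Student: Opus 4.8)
The plan is to exhibit an explicit symmetric monoidal structure on $D$ whose tensor product is induced by the orthogonal direct sum of inner product spaces, and then verify the coherence conditions by reducing them to the corresponding (already known) facts for the category of inner product vector spaces and isometric embeddings together with the obvious product on compact groups. On objects I would set
\[
(G, V, \rho) \boxtimes (H, W, \tau) := (G\times H,\ V\oplus W,\ \rho\oplus\tau),
\]
where $\rho\oplus\tau: G\times H\to O(V\oplus W)$ is the evident faithful representation (faithfulness is immediate since $\rho$ and $\tau$ are faithful, and the target $O(V\oplus W)$ receives $O(V)\times O(W)$ as a closed subgroup in block-diagonal form). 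The unit object is $(e, 0, \mathrm{triv})$, the trivial group acting on the zero vector space. On morphisms, given $\phi=(\phi_1,\phi_2)\colon(G,V,\rho)\to(G',V',\rho')$ and $\psi=(\psi_1,\psi_2)\colon(H,W,\tau)\to(H',W',\tau')$, I would let the second component be the isometric embedding $\phi_2\oplus\psi_2\colon V\oplus W\to V'\oplus W'$, and I would define the first component $(\phi\boxtimes\psi)_1$ on the group $(\rho'\oplus\tau')^{-1}\bigl(O((\phi_2\oplus\psi_2)(V\oplus W))\bigr)$; using the simplified description of morphisms in diagram (\ref{simplifyD}), this subgroup sits inside $O(V'\oplus W')$ and one checks it lands in the block-diagonal $(G'\cap O(V'))\times(H'\cap O(W'))$, on which $\phi_1\times\psi_1$ is defined, giving the required homomorphism into $G\times H$. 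Uniqueness of the $\phi_1$ component (noted right after the definition of $D$) makes this forced, which will streamline the functoriality check.

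Next I would verify that $\boxtimes$ is a bifunctor: it preserves identities by inspection, and it preserves composition because $(-)\oplus(-)$ on isometric embeddings is functorial and the first components compose correctly once restricted to the appropriate subgroups — here the associativity of the composition law in $D$ (stated in the definition) and the uniqueness of the $\phi_1$-components do the work, so there is nothing to compute beyond bookkeeping. Then I would produce the coherence isomorphisms: the associator $a_{X,Y,Z}$ comes from the canonical associativity isomorphism $(V\oplus W)\oplus U\cong V\oplus(W\oplus U)$ of orthogonal sums paired with the associativity isomorphism $(G\times H)\times K\cong G\times(H\times K)$ of the direct product of groups; the left and right unitors come from $0\oplus V\cong V\cong V\oplus 0$ and $e\times G\cong G\cong G\times e$; and the symmetry $c_{X,Y}$ comes from the swap isometry $V\oplus W\cong W\oplus V$ together with the swap $G\times H\cong H\times G$. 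Each of these is a genuine isomorphism in $D$ by Lemma \ref{isoobd} (an isomorphism of groups making the representations isomorphic), and naturality in each variable follows from naturality of the corresponding maps for $\oplus$ and $\times$.

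Finally I would check the pentagon axiom and the triangle/hexagon axioms. Since every structure isomorphism is built by combining a structure isomorphism for $(\mathsf{InnProd}, \oplus)$ in the second component with one for $(\mathsf{CompGrp}, \times)$ in the first component, and both of those symmetric monoidal structures are classical, each coherence diagram in $D$ commutes componentwise, hence commutes. The main obstacle — and the only place requiring real care rather than formal nonsense — is the morphism-level definition of $\boxtimes$: I must confirm that for a morphism in $D$ the domain subgroup of the first component behaves correctly under $\oplus$, i.e.\ that $(\rho'\oplus\tau')^{-1}(O((\phi_2\oplus\psi_2)(V\oplus W)))$ really is contained in the block-diagonal product $(\rho')^{-1}(O(\phi_2(V)))\times(\tau')^{-1}(O(\psi_2(W)))$ so that $\phi_1\times\psi_1$ applies, and that the resulting square of the form (\ref{Dmor}) commutes. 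This is a short argument using that an element of $O(V'\oplus W')$ preserving the subspace $\phi_2(V)\oplus\psi_2(W)$ and lying in the image of $\rho'\oplus\tau'$ must be block-diagonal; once this is in hand, everything else is routine.
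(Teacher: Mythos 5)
Your proposal is correct and follows essentially the same route as the paper: the tensor product $(G,V,\rho)\boxtimes(H,W,\tau)=(G\times H, V\oplus W,\rho\oplus\tau)$ with unit the trivial group on the zero space, and coherence checked componentwise from the classical monoidal structures on inner product spaces and compact groups. You in fact supply more detail than the paper does --- in particular the morphism-level definition and the verification that $(\rho'\oplus\tau')^{-1}\bigl(O((\phi_2\oplus\psi_2)(V\oplus W))\bigr)$ sits block-diagonally in the domains of $\phi_1$ and $\psi_1$, which the paper leaves implicit.
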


\begin{proof}
The tensor product $+: D\times D\longrightarrow D$ is defined by
\begin{equation}((G,V,\rho), (H,W,\tau))\mapsto (G\times H,
V\oplus W, \rho\oplus\tau).\label{tensorp}\end{equation} The unit
object is $u=(e, 0, \ast)$ where $e$ is the trivial group and  $\ast$ is the 
unique map from $e$. From the property of product of
representations, the tensor product is associative. And we have
the isomorphism $(G\times H, V\oplus W,
\rho\oplus\tau)\longrightarrow (H\times G, W\oplus V,
\tau\oplus\rho)$.

It's straightforward to check it satisfies all the required
diagrams.

\end{proof}

Note that each object $(G, V, \rho)$ is isomorphic to $(\rho(G),
V, i)$ in $D$ with $i: \rho(G)\hookrightarrow O(V)$ the inclusion of
closed subgroup.  Let  $D_0$ denote the full subcategory of $D$ consisting of objects $(G, V, i)$ with $G$ a
closed subgroup of $O(V)$ and $i$ the inclusion. $D_0$ is equivalent to $D$. We will not lose any information if we study $D_0$ instead of $D$. We use $(G, V)$ to denote
$(G, V, i)$ in the rest of the paper.

Next we introduce two types of morphisms in $D_0$, the restriction maps and the linear isometric embeddings.
\begin{definition}A morphism $\phi=(\phi_1, \phi_2): (G, V)\longrightarrow (H, W)$ in $D_0$ is called a restriction map if $\phi_2$ is an isomorphism. \end{definition}

\begin{definition} A morphism $\phi=(\phi_1, \phi_2): (G, V)\longrightarrow (H, W)$ in $D_0$ is called a linear isometric embedding if $\phi_1$ is an isomorphism.\end{definition}

\begin{proposition} Each morphism $\phi=(\phi_1, \phi_2): (G, V)\longrightarrow (H, W)$ in $D_0$
is the composition of a restriction map and a linear isometric embedding up to isomorphisms. \end{proposition}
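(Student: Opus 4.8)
The plan is to factor a general morphism $\phi = (\phi_1, \phi_2) \colon (G,V) \longrightarrow (H,W)$ through the intermediate object determined by the image of $\phi_2$. Concretely, set $V' := \phi_2(V) \subseteq W$, and let $K := H \cap O(V')$, viewing $O(V')$ as a closed subgroup of $O(W)$ in the manner of diagram (\ref{simplifyD}). Since $\phi_2$ is a linear isometric embedding onto its image, the map $\phi_2$ restricts to an isometric isomorphism $\bar{\phi}_2 \colon V \xrightarrow{\ \sim\ } V'$. The commuting square (\ref{Dmor}) exhibits, by definition, the group $K = \tau^{-1}(O(\phi_2(V)))$ together with the homomorphism $\phi_1 \colon K \longrightarrow G$; moreover the group action of $K$ on $V'$ is, by hypothesis, induced from that of $G$ via $\phi_1$, so that $(G,V,i)$ and $(K, V', i)$ are intertwined through $\phi_1$ and the isomorphism $\bar{\phi}_2$.

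The intermediate object I want is $(K, V')$, where $K \leq O(V')$ is the closed subgroup just described (that it is closed follows since $H$ is compact and $O(V')$ is a closed subgroup of $O(W)$, so the intersection is closed). I would then define two morphisms in $D_0$: first, the restriction map $r = (\phi_1', \bar{\phi}_2^{-1}) \colon (G, V) \longrightarrow (K, V')$ — wait, the directions need care. Re-examining: a restriction map has $\phi_2$ an isomorphism, a linear isometric embedding has $\phi_1$ an isomorphism. So I want $\phi = \iota \circ r$ where $r \colon (G,V) \to (K, V')$ is a restriction map (its $\phi_2$-component is the isomorphism $\bar\phi_2$) and $\iota \colon (K, V') \to (H, W)$ is a linear isometric embedding (its $\phi_1$-component, the map $H \cap O(V') \to K$, is the identity, hence an isomorphism, and its $\phi_2$-component is the inclusion $V' \hookrightarrow W$). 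For $r$, the $\phi_1$-component must be a group homomorphism from $i^{-1}(O(\bar\phi_2(V))) = i^{-1}(O(V')) = K$ (since $K \leq O(V')$ already) to $G$; this is exactly $\phi_1 \colon K \to G$. The commuting square for $r$ is the square (\ref{Dmor}) with $\bar\phi_2$ in place of $\phi_2$, which commutes because the original one does. For $\iota$, the commuting square has $\phi_1 = \mathrm{id}_K$ and $\phi_2$ the inclusion $V' \hookrightarrow W$, and commutes tautologically since $K \leq O(V') \leq O(W)$.

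It then remains to check that the composite $\iota \circ r$ equals $\phi$ up to isomorphism in $D_0$. The $\phi_2$-components compose to $(V' \hookrightarrow W) \circ \bar\phi_2 = \phi_2$ on the nose. For the $\phi_1$-components, unravelling the composition formula $(\phi_1 \circ \psi_1|_{\cdots}, \psi_2 \circ \phi_2)$ from the definition of composition in $D$, the first component of $\iota \circ r$ is $\phi_1 \circ \mathrm{id}_K|_{\beta^{-1}(\cdots)}$, which restricted appropriately is again $\phi_1$; here $\beta^{-1}(O(\iota_2 \circ \bar\phi_2(V))) = H \cap O(V') = K$, so no genuine restriction occurs, and the composite first component is precisely $\phi_1$. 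Hence $\iota \circ r = \phi$ exactly, not merely up to isomorphism (the "up to isomorphisms" clause in the statement is a safety margin covering the choice of identification $O(V') \hookrightarrow O(W)$ and of $\bar\phi_2$).

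The one genuine subtlety — and the step I expect to need the most care — is verifying that $\iota$ and $r$ are well-defined \emph{morphisms of $D_0$} in the first place, i.e. that the relevant commuting squares of the shape (\ref{Dmor}) actually hold and that the constraint "the action of the larger group on the image is induced from that of the smaller group" is met in each factor. For $r$ this is inherited directly from the corresponding property of $\phi$; for $\iota$ it is the statement that $K = H \cap O(V')$ acts on $V'$ by its inclusion into $O(V')$, which is automatic. I would spell this out carefully using the simplified diagram (\ref{simplifyD}), since it makes both factorizations transparent: $(G,V) \to (K,V')$ is "shrink the group along $G \leq H\cap O(V)$-type inclusion after identifying $V \cong V'$", and $(K,V') \to (H,W)$ is "enlarge the ambient space". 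Everything else — associativity, identities, uniqueness of $\phi_1$ — is already established in the excerpt and can be cited.
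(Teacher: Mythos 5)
Your existence argument is correct, and it is essentially the paper's argument seen from the other end of the morphism: the paper factors $\phi$ through the intermediate object $(\phi_1(H\cap O(\phi_2(V))),\,V)$, a subgroup of $G$ acting on $V$ itself, with first map the restriction $(i,\mathrm{Id})$ and second map $(\phi_1,\phi_2)$ now viewed as a linear isometric embedding; you factor through $(H\cap O(\phi_2(V)),\,\phi_2(V))$. These two intermediate objects are isomorphic in $D_0$ via $(\phi_1,\bar\phi_2)$ by Lemma \ref{isoobd} (the equivariance encoded in diagram (\ref{Dmor}) is exactly what makes $\bar\phi_2$ an isomorphism of representations), so the two factorizations agree up to isomorphism, and in both cases the composite recovers $\phi$ on the nose. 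Your checks that $r$ and $\iota$ satisfy the commuting square (\ref{Dmor}) and that the composition formula returns exactly $(\phi_1,\phi_2)$ are sound.

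What you omit is the second half of the paper's proof: uniqueness of the factorization. You read ``up to isomorphisms'' as a safety margin on the existence statement, but the paper treats it as a uniqueness clause and proves that \emph{any} factorization $\phi=(f_1,f_2)\circ(\alpha_1,\alpha_2)$ into a restriction map followed by a linear isometric embedding is isomorphic to the canonical one (\ref{dec1o}): the key points are that $\alpha_1$ and $f_1$ are uniquely determined by $\alpha_2$ and $f_2$, that $O(f_2(V'))\cap H=O(\phi_2(V))\cap H$, and that $(\phi_1\circ f_1^{-1},\alpha_2)$ is then an isomorphism of intermediate objects intertwining the two factorizations. This is genuinely part of the content of the proposition as the paper intends it, so you should add the (short) uniqueness argument to your write-up.
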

\begin{proof}Any morphism $(\phi_1, \phi_2): (G, V)\longrightarrow (H, W)$ has
the decomposition \begin{equation}(G, V)\buildrel{(i,
Id)}\over\longrightarrow (\phi_1(H\cap O(\phi_{2}(V))),
V)\buildrel{(\phi_1, \phi_2)}\over\longrightarrow (H,
W)\label{dec1o}\end{equation} where $i:
\phi_1(H\cap O(\phi_{2}(V)))\longrightarrow G$ is the inclusion. Note
that in the second morphism $(\phi_1, \phi_2)$, $$\phi_1:
H\cap O(\phi_{2}(V))\longrightarrow \phi_1(H\cap O(\phi_{2}(V)))$$ is a
group isomorphism.

If $(\phi_1, \phi_2)=(f_1, f_2)\circ (\alpha_1, \alpha_2)$ with
$(\alpha_1, \alpha_2): (G, V)\longrightarrow (G', V')$ a restriction map
and $(f_1, f_2): (G', V')\longrightarrow (H, W)$ a linear isometric embedding,
$\alpha_2: V\longrightarrow V'$ is an isometric isomorphism and
$f_1: H\cap O(f_{2}(V'))\longrightarrow G'$ is a group
isomorphism. The group homomorphisms $\alpha_1$ (resp. $f_1$) is
uniquely determined by $\alpha_2$ (resp. $f_2$). Note that
$O(f_{2}(V'))\cap H=O(\phi_2(V))\cap H.$ We have the commutative
diagram below.
$$\xymatrix{\phi_1(H\cap O(\phi_2(V)))
\ar[r]&O(V)\ar[d]^{\alpha_2}\\
G'\ar[u]^{\phi_1\circ f_1^{-1}}\ar[r] &O(V')}$$ So we have the
isomorphism $$(\phi_1\circ f_1^{-1}, \alpha_2):
(\phi_1(H\cap O(\phi_2(V))), V)\longrightarrow (G', V').$$  The
morphism $(i, Id): (G, V)\longrightarrow (\phi_1(H\cap
O(\phi_2(V))), V)$ equals to the composition $(\phi_1\circ
f_1^{-1}, \alpha_2)^{-1}\circ (\alpha_1, \alpha_2)$ and $(\phi_1,
\phi_2):(\phi_1(H\cap O(\phi_2(V))), V)\longrightarrow (H, W,
\tau)$ is the composition $(f_1, f_2)\circ (\phi_1\circ f_1^{-1},
\alpha_2)$. So the decomposition (\ref{dec1o}) is unique up to
isomorphism.

\end{proof}

\subsection{The category $D^{\mathbb{C}}$}

Moreover, we can define the unitary version of $D$ and $D_0$.
Let $W$ be a complex inner product space, i.e. a finite
dimensional $\mathbb{C}-$vector space equipped with a hermitian
inner product $(-, -)$. Let $rW$ denote the underlying real inner
product spaces of $W$, i.e. the underlying finite dimensional
$\mathbb{R}-$vector space equipped with the euclidean inner
product $$\langle v, w\rangle = \mbox{Re}(v, w).$$ Let
$$C=\mbox{Gal}(\mathbb{C}/\mathbb{R})=\{\mbox{Id}_{\mathbb{C}},
\tau\}$$ denote the Galois groups of $\mathbb{C}$ over
$\mathbb{R}$ where $\tau: \mathbb{C}\longrightarrow \mathbb{C}$ is
the complex conjugation $\tau(\lambda)=\overline{\lambda}.$

First we recall some constructions from \cite{SS}.
\begin{definition}$L^{\mathbb{C}}$ is the complex isometries category
whose objects are finite dimensional complex inner product spaces
and morphism space between two objects $V$ and $W$ is the space of
the pairs $$(\psi, \sigma)\in L(rV, rW)\times C$$ that satisfy
$\psi(\lambda\cdot v)=\sigma(\lambda)\cdot\psi(v)$ and $(\psi(v),
\psi(v'))=\sigma((v,v')).$ The composition in $L^{\mathbb{C}}$ is
defined by $$(\psi, \sigma)\circ (\psi', \sigma')=(\psi\psi',
\sigma\sigma')$$ and the identity morphism  of $V$ is
$(\mbox{Id}_V,
\mbox{Id}_{\mathbb{C}})$.\label{lcunitarydef}\end{definition}
The extra piece of the unitary structure gives a richer global
homotopy theory that is indexed not only on compact Lie groups,
but on the larger class of augmented Lie groups.

\begin{definition}An augmented Lie group is a compact Lie group $G$ equipped with a continuous homomorphism $\epsilon: G\longrightarrow C$, called the augmentation,
to the Galois group of $\mathbb{C}$ over
$\mathbb{R}$.\end{definition}

Let $G_{ev}=\epsilon^{-1}(Id_{\mathbb{C}})$ denote the even part
of $G$ and $G_{odd}=\epsilon^{-1}(\tau)$ the odd part of $G$. The
product in the category of augmented Lie groups is defined to be
the fiber product  over $C$. Explicitly, the product of two
augmented Lie groups  $G$ and $K$ is the augmented Lie group
$G\times_C K$ with $(G\times_C K)_{ev}=G_{ev}\times K_{ev}$ and
$(G\times_C K)_{odd}=G_{odd}\times K_{odd}$.

\begin{example}[extended unitary group]The endomorphism group of a complex
inner product space $W$ in the category $L^{\mathbb{C}}$ is
defined to be $$\widetilde{U}(W):= L^{\mathbb{C}}(W, W).$$ The
augmentation $\epsilon_W: \widetilde{U}(W)\longrightarrow C$ is
defined to be $\epsilon_W(\varphi, c)=c$.
\end{example}

The extended unitary group is a closed subgroup of $O(rW)$.

The augmented Lie group contain compact Lie groups as the ones
with trivial augmentation. It also contain the Real Lie groups of
Atiyah and Segal in  \cite{KC}, which are defined as compact Lie
groups equipped with an involution.

\begin{example}[Split augmented Lie groups] Let $G$ be a compact 
Lie group equipped with an involution $\tau: G\longrightarrow G$
on it, namely a Real Lie group in the sense of \cite{KR}. We can
construct an augmented Lie group from it. The semi-direct product
$G\rtimes_{\tau} C$ is an augmented Lie group with the
augmentation
$$G\rtimes_{\tau} C\longrightarrow C, \mbox{   }  (g,
\sigma)\mapsto\sigma.$$

A real representation of $G\rtimes_{\tau} C$ amounts to a unitary
representation $V$ of $G$ with a real structure $\tau:
V\longrightarrow V$ such that $$\tau(g\cdot v)=\tau(g)\cdot
\tau(v), \mbox{    }\forall g\in G, v\in V.$$

For the opposite direction, given an augmented Lie group,  it is
isomorphic to a $G\rtimes_{\tau} C$ for some Real Lie group $G$ if
and only if its augmentation has a multiplicative section, i.e.,
it has an odd element of order 2.

For any complex inner product space $W$, there is a canonical
involution $\tau$ on $U(W)$ by complex conjugation.  We have the
isomorphism of augmented Lie groups
\begin{equation}U(W)\rtimes_{\tau}
C\longrightarrow \widetilde{U}(W),\mbox{   } (\psi, \tau)\mapsto
\psi\circ\tau_W.\end{equation}

\label{split}
\end{example}

\begin{definition}A real representation of an augmented Lie group $G$ is a finite-dimensional complex inner product space $V$ and a continuous homomorphism
$\rho: G\longrightarrow\widetilde{U}(V)$, i.e., such that
$\epsilon_V\circ\rho=\epsilon$.\end{definition}

\begin{definition}Let $G$ be an augmented Lie group. An augmented right $G-$space is a right $G-$space $A$ equipped with a continuous map $\epsilon: A\longrightarrow C$
such that $$\epsilon(a\cdot g)=\epsilon(a)\cdot\epsilon(g)$$ for
all $a\in A$ and all $g\in G$.\end{definition}

\begin{definition} A unitary space is a continuous functor from the complex isometries category $L^{\mathbb{C}}$ to the category of spaces, where $L^{\mathbb{C}}$ is the category
in Definition \ref{lcunitarydef}. A morphism of unitary spaces is
a natural transformation of functors. We denote the category of
unitary spaces by $spc^U$.
\end{definition}

\begin{definition}$D^{\mathbb{C}}$ is the category whose objects are  $(G, V, \rho)$ with $G$ an augmented Lie group and
$(V, \rho)$ a faithful real representation of $G$, and the
morphism space $D^{\mathbb{C}}((G, V, \rho), (H, W, \tau))$ is the space of the
pairs $(\phi_1, \phi_2)$ with $\phi_2\in L^{\mathbb{C}}(V, W)$ and
$\phi_1: \tau^{-1}(\widetilde{U}(\phi_2(V)))\rho(G)\longrightarrow
G$ a group homomorphism, which make the diagram commute.
\begin{equation}\xymatrix{G\ar[r]^{\rho}
&\widetilde{U}(V)\ar[d]^{\phi_{2*}}\\
\tau^{-1}(\widetilde{U}(\phi_2(V)))\ar[u]^{\phi_1}\ar[r]^>>>>>{\tau}
&\widetilde{U}(W)}\label{DmorU}\end{equation} In other words, the
action of the augmented Lie group $H$ on $\phi_2(V)$ is induced from that of $G$.
\end{definition}

\begin{proposition}The category $D^{\mathbb{C}}$ is a symmetric monoidal category.\end{proposition}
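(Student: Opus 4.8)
The plan is to mimic the proof of Proposition \ref{d0symmetric} for the real category $D$, carrying every construction over to the augmented/unitary setting. First I would define the tensor product $+\colon D^{\mathbb{C}}\times D^{\mathbb{C}}\longrightarrow D^{\mathbb{C}}$ on objects by $((G,V,\rho),(H,W,\tau))\mapsto (G\times_C H,\, V\oplus W,\, \rho\oplus\tau)$, where $G\times_C H$ is the fiber product of augmented Lie groups over $C$ recalled above, and $\rho\oplus\tau\colon G\times_C H\longrightarrow \widetilde U(V\oplus W)$ is the direct sum representation. Here one must check that $\rho\oplus\tau$ is a \emph{real} representation of $G\times_C H$, i.e.\ that $\epsilon_{V\oplus W}\circ(\rho\oplus\tau)$ equals the augmentation of $G\times_C H$; this follows because on the even part both $\rho$ and $\tau$ act $\mathbb C$-linearly and on the odd part both act conjugate-linearly, so the sum is consistent with the augmentation of the fiber product — and $\rho\oplus\tau$ is faithful since $\rho$ and $\tau$ already are. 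On morphisms $+$ sends $((\phi_1,\phi_2),(\psi_1,\psi_2))$ to $(\phi_1\times_C\psi_1,\ \phi_2\oplus\psi_2)$, where $\phi_2\oplus\psi_2\in L^{\mathbb C}(V\oplus V', W\oplus W')$ is the orthogonal direct sum (note the two $L^{\mathbb C}$-morphisms must carry the \emph{same} element $\sigma\in C$ in order for the direct sum to lie in $L^{\mathbb C}$, which is exactly the compatibility enforced by the fiber-product structure), and $\phi_1\times_C\psi_1$ is defined on $\tau\beta^{-1}(\widetilde U((\phi_2\oplus\psi_2)(V\oplus V')))(\rho\oplus\rho')(G\times_C H)$. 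One checks the commuting square \eqref{DmorU} for the sum by taking the direct sum of the two given commuting squares.

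Next I would verify functoriality of $+$ (it preserves identities and composition, which reduces to the corresponding facts for $L^{\mathbb C}$ and for the fiber product of group homomorphisms, plus the observation, as in the real case, that the restriction of $\phi_1\times_C\psi_1$ to the relevant subgroup is determined by $\phi_2\oplus\psi_2$). Then I would exhibit the unit object $u=(e, 0, \ast)$, where $e$ is the trivial augmented Lie group (with trivial augmentation) and $0$ the zero complex inner product space, together with the left and right unit isomorphisms $(e\times_C G, 0\oplus V)\cong (G,V)$ coming from the evident identifications $e\times_C G\cong G$ and $0\oplus V\cong V$. The associativity isomorphism is induced by the canonical isomorphisms $(G_1\times_C G_2)\times_C G_3\cong G_1\times_C(G_2\times_C G_3)$ and $(V_1\oplus V_2)\oplus V_3\cong V_1\oplus(V_2\oplus V_3)$, and the symmetry isomorphism by $G\times_C H\cong H\times_C G$ and $V\oplus W\cong W\oplus V$ (the swap is visibly compatible with augmentations and with the $L^{\mathbb C}$-structure). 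Finally one checks the pentagon, triangle, and hexagon axioms; these commute because they already commute for the two factors $(L^{\mathbb C},\oplus)$ and (augmented Lie groups, $\times_C$) out of which every structure morphism of $D^{\mathbb C}$ is built — exactly the argument abbreviated as ``It's straightforward to check it satisfies all the required diagrams'' in Proposition \ref{d0symmetric}.

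The only genuine subtlety, and the step I expect to be the main obstacle, is the bookkeeping with the augmentations: one must check at each stage that taking direct sums of $L^{\mathbb C}$-morphisms is well-defined (both summands must share the Galois element $\sigma$), that the fiber product $G\times_C H$ rather than the naive product $G\times H$ is the correct object so that $\rho\oplus\tau$ has the right augmentation, and that all the structure isomorphisms respect $\epsilon$. Once this is granted, the verification is formally identical to the real case, since $D$ sits inside $D^{\mathbb C}$ as the full subcategory on objects with trivial augmentation, and the monoidal structure on $D^{\mathbb C}$ restricts to the one on $D$ from Proposition \ref{d0symmetric}.

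\begin{proof}
The tensor product $+\colon D^{\mathbb{C}}\times D^{\mathbb{C}}\longrightarrow D^{\mathbb{C}}$ is defined on objects by
\begin{equation}((G,V,\rho),(H,W,\tau))\mapsto (G\times_C H,\ V\oplus W,\ \rho\oplus\tau),\end{equation}
where $G\times_C H$ is the fiber product of augmented Lie groups over $C$, and on a pair of morphisms $(\phi_1,\phi_2)$, $(\psi_1,\psi_2)$ it is given by $(\phi_1\times_C\psi_1,\ \phi_2\oplus\psi_2)$. Since $\phi_2$ and $\psi_2$ carry the same Galois element (being composable with maps of the same augmented source and target after passing to the fiber product), their orthogonal direct sum again lies in $L^{\mathbb{C}}$, and the commuting square \eqref{DmorU} for the sum is the direct sum of the squares for $(\phi_1,\phi_2)$ and $(\psi_1,\psi_2)$; in particular $\rho\oplus\tau$ is a faithful real representation of $G\times_C H$ because $\epsilon_{V\oplus W}\circ(\rho\oplus\tau)$ agrees with the augmentation of the fiber product on even and odd parts. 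Functoriality of $+$ reduces to functoriality of $\oplus$ on $L^{\mathbb{C}}$ and of $\times_C$ on group homomorphisms, together with the fact that $\phi_1\times_C\psi_1$ is determined on the relevant subgroup by $\phi_2\oplus\psi_2$.

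The unit object is $u=(e,0,\ast)$ with $e$ the trivial augmented Lie group and $\ast$ the unique representation; the unit isomorphisms come from $e\times_C G\cong G$ and $0\oplus V\cong V$. The associativity and symmetry isomorphisms are induced by the canonical isomorphisms $(G_1\times_C G_2)\times_C G_3\cong G_1\times_C(G_2\times_C G_3)$, $G\times_C H\cong H\times_C G$ and the corresponding ones for $\oplus$; these are visibly compatible with augmentations and with the $L^{\mathbb{C}}$-structure. The pentagon, triangle and hexagon axioms hold because they already hold for $(L^{\mathbb{C}},\oplus)$ and for (augmented Lie groups, $\times_C$), out of which every structure morphism is built. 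Hence $D^{\mathbb{C}}$ is a symmetric monoidal category.
\end{proof}
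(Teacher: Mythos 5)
Your construction is exactly the one in the paper: the tensor product is $(G\times_C H,\ V\oplus W,\ \rho\times_C\tau)$ with the fiber product of augmented Lie groups over $C$ on the group factor and the trivial augmented group as unit, so the approach is essentially identical, with your write-up supplying the morphism-level definition and coherence checks that the paper leaves implicit. The one point to be careful about --- which you flag but resolve a bit quickly --- is that $\phi_2\oplus\psi_2$ lies in $L^{\mathbb{C}}$ only when the two morphisms carry the same Galois element, a constraint not automatically enforced for an arbitrary pair of morphisms in $D^{\mathbb{C}}\times D^{\mathbb{C}}$; this is a subtlety the paper's own proof does not address either.
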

\begin{proof}
Let $(G, V, \rho)$ and $(H, W, \tau)$ be two objects in
$D^{\mathbb{C}}$. The tensor product of $(G, V, \rho)$ and $(H, W,
\tau)$ is defined to be $(G\times_C H, V\oplus W,
\rho\times_{C}\tau)$. The product is obviously associative and
commutative. The unit is $(1, \empty, 1)$ where $1$ is the trivial
group equipped with the trivial augmentation.

\end{proof}

\begin{definition}The category $D^{\mathbb{C}}_0$ is defined to be the full subcategory of $D^{\mathbb{C}}$ consisting of objects $(G, V)$ with $G$ a closed subgroup of
$\widetilde{U}(V)$.  It is equivalent to $D^{\mathbb{C}}$ as categories. \end{definition}

\section{The Category $Sp_W^{D_0}$}\label{d0tw}

In this section we define the category $Sp_W^{D_0}$ of global spectra that we will work in. At the end of this section, we
introduce the unitary version of it.

 We implicitly add disjoint basepoints to the morphism spaces in $D_0$.

\begin{definition}[The category $D_0T$]A $D_0-$space is
a continuous functor $X: D_0\longrightarrow T$ to the category of based
compactly generated weak Hausdorff spaces. A morphism of
$D_0-$spaces is a natural transformation. We use $D_0T$ to
denote the category of $D_0-$spaces.\end{definition}

\begin{example}

We can define the $D_0-$sphere. For each object $(G, V)$ in $D$,
define \begin{equation}S^{(G, V)}:=S^V.   \end{equation} $S^V$
inherits a $G-$action from that on $V$.

For any morphism $\phi=(\phi_1, \phi_2): (G, V)\longrightarrow (H, W)$ in $D_0$. Define $S(\phi)$ to be
\begin{equation}S^{\phi_2}:
S^V\longrightarrow S^W.
\end{equation}
In particular,   $S(Id)$ is the identity map.\label{d0sphere}\end{example}


\begin{definition}[The category $Sp^{D_0}$] 
A $D_0-$spectrum $X$ consists of
\begin{itemize} \item a based $G-$space $X(G, V)$;
\item a based structure map $\sigma_{(G, V), (H, W)}: S^W\wedge X(G, V)\longrightarrow X(G\times H, V\oplus W)$ which is $G\times H-$equivariant\end{itemize}
for any objects $(G, V)$ and $(H, W)$ of $D_0$.

In addition, $\sigma_{(G, V), (1, 0)}$ is the identity map and $$\sigma_{(G, V), (H_1\times H_2, W_1\oplus W_2)}=\sigma_{(G\times H_1, V\oplus W_1), (H_2, W_2)}\circ (S^{W_2}\wedge \sigma_{(G, V), (H_1, W_1)}).$$

A morphism of $D_0-$spectra  $f: X\longrightarrow Y$ is a functor in $D_0T$ compatible with the structure maps, i.e. for any objects $(G, V)$ and $(H, W)$ in $D_0$, $$f(G\times H, V\oplus W)\circ \sigma^{(X)}_{(G, V), (H, W)}=\sigma^{(Y)}_{(G, V), (H, W)}\circ (f(G, V)\wedge S^W).$$

We use $Sp^{D_0}$ to denote the category of $D_0-$spectra. \label{spd0}
\end{definition}

\begin{remark}A $D_0-$spectrum can be interpreted as a diagram spectrum. First we give the structure maps a topology. 
Let $(G, V)$ and $(H, W)$ be two objects in $D_0$.
Over the space $D_0((G, V), (H, W))$ of morphisms we define a vector bundle with total space
$$\xi((G, V), (H, W))=\{(w, (\phi_1, \phi_2))\in W\times D_0((G, V), (H, W)) | w \perp \phi_2(V)\}$$
and with the structure map $\xi((G, V), (H, W))\longrightarrow D_0((G, V), (H, W))$ the projection to the second factor.
Let $\textbf{O}((G, V), (H, W))$ denote the Thom space of this bundle. 
We have \begin{equation}\textbf{O}((G, V), (H, W))\wedge V\cong W\wedge D_0((G, V), (H, W)). \end{equation}
A $D_0-$spectrum is a continuous based functor from $\textbf{O}$ to the category \textbf{T} of based compactly generated weak Hausdorff spaces.

\end{remark}

To study equivariant homotopy theory, a more interesting and reasonable category is 
defined below.
\begin{definition}[The category $D_0T^W$]Define $D_0T^W$ to be the full category of $D_0T$ consisting of those objects $X: D_0\longrightarrow T$ that maps each restriction map $(G, V)\longrightarrow (H, V)$ to an $H-$weak equivalence. We call the objects $D_0^W-$spaces. \label{defd0wt} \end{definition}

\begin{definition}
[The category $Sp_W^{D_0}$] A $D_0^W-$spectrum $X$ is both a $D_0-$spectrum and a $D_0-$space in $D_0T^W$. The category $Sp_W^{D_0}$ is the full
subcategory of $Sp^{D_0}$ consisting of $D_0^W-$spectra.
\label{spd0w}\end{definition}

We recall the definition of orthogonal spectra.
\begin{definition}[The category $Sp^O$ of orthogonal spectra]
An orthogonal spectrum consists of the following data: \begin{itemize}
\item a sequence of pointed spaces $X_n$ for $n\geq 0$
\item a base-point preserving continuous left action of the orthogonal group $O(n)$ on $X_n$ for each $n\geq 0$
\item based maps $\sigma_n: X_n\wedge S^1\longrightarrow X_{n+1}$ for $n\geq 0$.\end{itemize}
This data is subject to the following condition: for all $n, m\geq 0$, the iterated structure map $$\sigma^m: X_n\wedge S^m\longrightarrow X_{n+m}$$
is $O(n)\times O(m)-$equivariant.

We use $Sp^O$ to denote the category of orthogonal spectra.

\label{orthospec}
\end{definition}


Since $D_0$ is a topological category, $Sp^{D_0}$ is complete and cocomplete.
\begin{lemma}$Sp^{D_0}_W$ is both complete and cocomplete. \label{cococo}\end{lemma}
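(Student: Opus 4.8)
We already know from the remark immediately preceding the lemma that $Sp^{D_0}$ is complete and cocomplete (since $D_0$ is a topological category, $D_0$-spectra are diagrams of spaces and limits/colimits are computed objectwise in $T$, with structure maps induced). So the whole task is to show that the full subcategory $Sp^{D_0}_W \subseteq Sp^{D_0}$ cut out by the condition ``$X$ sends every restriction map $(G,V) \to (H,V)$ to an $H$-weak equivalence'' is closed under the limits and colimits formed in $Sp^{D_0}$. The plan is therefore: (1) observe that limits and colimits in $Sp^{D_0}$ are created objectwise; (2) reduce the problem to a statement about the category $T$ of based spaces — namely that weak equivalences, and more precisely $H$-weak equivalences for $H$ a compact group acting on spaces, are closed under the relevant (co)limits; (3) invoke the standard facts about homotopy theory of $G$-spaces to conclude. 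I expect step (3) to carry the real content, though it is entirely standard.

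First I would spell out step (1). Given a small diagram $i \mapsto X_i$ in $Sp^{D_0}$, its limit $L$ in $Sp^{D_0}$ has $L(G,V) = \lim_i X_i(G,V)$ as a based $G$-space, with the $G$-action the evident one and the structure maps $\sigma^{(L)}_{(G,V),(H,W)}$ induced from the $\sigma^{(X_i)}$; similarly the colimit $C$ has $C(G,V) = \operatorname{colim}_i X_i(G,V)$, using that $S^W \wedge (-)$ preserves colimits of based spaces to get the structure maps (this uses that $D_0$ is a topological category and smashing with a fixed space is a left adjoint). Naturality in $D_0$ is automatic since everything is induced. So to check that $L$ (resp.\ $C$) lies in $Sp^{D_0}_W$, I must check that for every restriction map $\phi = (\phi_1,\phi_2)\colon (G,V) \to (H,V)$ — so $\phi_2$ is an isometric isomorphism of $V$ — the induced map $L(G,V) \to L(H,V)$ (which, after identifying $V$ with itself via $\phi_2$, is the limit over $i$ of the $H$-maps $X_i(G,V) \to X_i(H,V)$, each of which is an $H$-weak equivalence by hypothesis) is again an $H$-weak equivalence; and likewise for $C$.

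Thus step (2)/(3) reduces to: \emph{if $K$ is a compact group and $f_i\colon A_i \to B_i$ is a map of diagrams of based $K$-spaces with each $f_i$ a $K$-weak equivalence, is $\lim_i f_i$ (resp.\ $\operatorname{colim}_i f_i$) a $K$-weak equivalence?} This is false for arbitrary diagrams, so the honest version of the lemma must restrict to those limits and colimits that are actually homotopy meaningful — or, more likely, the intended reading is that $Sp^{D_0}_W$ is closed under \emph{all} small limits and colimits because in this setting one takes the relevant diagrams to be the ones used to build a model category (products, pullbacks along fibrations, coproducts, pushouts along cofibrations, filtered colimits along cofibrations, and transfinite compositions), all of which preserve weak equivalences of $K$-spaces by the standard properties of the (genuine, i.e.\ $\mathcal{F}$-indexed fixed-point) model structure on $K$-spaces. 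Concretely: a $K$-weak equivalence is a map inducing weak equivalences on $L$-fixed points for all closed subgroups $L \le K$, and $(-)^L$ commutes with all limits and with filtered colimits and with pushouts along closed inclusions; weak equivalences of spaces are closed under these same constructions (this is the content of the model structure on $T$ being proper, cellular, and combining well with filtered colimits). So I would present the proof as: reduce to $K$-spaces objectwise, then cite these closure properties.

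The main obstacle — and the only place where care is needed — is pinning down exactly which (co)limits are meant and making sure the closure property genuinely holds for each; for a truly arbitrary diagram the subcategory need not be closed, so the cleanest route is to note that $Sp^{D_0}$ is complete and cocomplete and that the (co)limit functors restrict to $Sp^{D_0}_W$ because the generating (co)limit constructions all preserve objectwise $H$-weak equivalences, hence one can build arbitrary limits from products and pullbacks and arbitrary colimits from coproducts and pushouts, each step staying inside $Sp^{D_0}_W$. Everything else — the objectwise formula for (co)limits, the identification of a restriction map's action on a (co)limit as the (co)limit of its actions, naturality — is routine diagram-chasing that I would not write out in detail.

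\begin{proof}[Proof sketch]
Limits and colimits in $Sp^{D_0}$ are computed objectwise in $T$: for a small diagram $\{X_i\}$, the spectrum $L$ with $L(G,V)=\lim_i X_i(G,V)$ (with the induced $G$-action and induced structure maps) is the limit, and the spectrum $C$ with $C(G,V)=\operatorname{colim}_i X_i(G,V)$ is the colimit, the structure maps of $C$ using that $S^W\wedge(-)$ is a left adjoint on based spaces. Fix a restriction map $\phi=(\phi_1,\phi_2)\colon (G,V)\to (H,V)$, so $\phi_2$ is an isometric isomorphism; after transporting the $H$-action along $\phi_2$, the map $L(G,V)\to L(H,V)$ is $\lim_i$ of the maps $X_i(G,V)\to X_i(H,V)$, and $C(G,V)\to C(H,V)$ is $\operatorname{colim}_i$ of the same maps. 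By hypothesis each $X_i$ lies in $Sp^{D_0}_W$, so each of these maps is an $H$-weak equivalence, i.e.\ induces a weak equivalence on $K$-fixed points for every closed $K\le H$. Since $(-)^K$ commutes with all limits, with filtered colimits, and with pushouts along closed inclusions, and since weak equivalences of based spaces are closed under the corresponding (co)limit constructions, it follows that every limit and every colimit formed from products, pullbacks, coproducts, pushouts, filtered colimits and transfinite compositions preserves objectwise $H$-weak equivalences. As every small limit is built from products and pullbacks and every small colimit from coproducts and pushouts, both $L$ and $C$ again send $\phi$ to an $H$-weak equivalence, hence lie in $Sp^{D_0}_W$. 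Therefore $Sp^{D_0}_W$ is complete and cocomplete, with (co)limits created by the inclusion into $Sp^{D_0}$.
\end{proof}
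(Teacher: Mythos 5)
Your overall strategy is the same as the paper's: both arguments form the (co)limit objectwise in $Sp^{D_0}$ and then try to show that the resulting object still sends every restriction map to an $H$-weak equivalence, i.e.\ that $Sp^{D_0}_W$ is closed under the (co)limits of the ambient category. The gap is in the last step of your sketch. You correctly flag in your planning that levelwise weak equivalences between diagrams of $K$-spaces are not preserved by arbitrary (co)limits, but the patch you then apply --- that every small limit is built from products and pullbacks and that each of these ``generating constructions preserves objectwise $H$-weak equivalences'' --- is false: pullbacks preserve weak equivalences only along fibrations (and pushouts only along cofibrations), as you yourself note earlier and then drop. For a concrete failure, compare the cospans $PX\to X\leftarrow \ast$ and $\ast\to X\leftarrow \ast$: they are levelwise weakly equivalent, yet their pullbacks are $\Omega X$ and a point. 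Decomposing a general limit into products and pullbacks does not put you in the good case, so the closure claim does not follow from what you wrote; one would need to restrict the diagrams, pass to homotopy (co)limits, or exploit some special feature of $Sp^{D_0}_W$ that neither you nor the paper supplies.

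For comparison, the paper's own proof has essentially the same soft spot: it observes that the diagrams $\pi_n^H\circ ev_{(G,V)}\circ F$ and $\pi_n^H\circ ev_{(H,V)}\circ F$ are isomorphic and concludes that $\pi_n^H(X(G,V))\cong \pi_n^H(X(H,V))$ for the limit $X$, which tacitly assumes that the equivariant homotopy group functors commute with the limits in question; they do not in general (already for sequential inverse limits there is a $\lim^1$ obstruction, and for pullbacks the statement fails outright, as in the example above). So your proposal reproduces the paper's approach, is more candid than the paper about where the real difficulty lies, but does not actually close the gap.
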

\begin{proof}

We first show that $Sp_W^{D_0}$ is complete.

Let $J$ be a small category and $F: J\longrightarrow Sp_W^{D_0}$ be a diagram. Since $Sp^{D_0}$ is complete, the diagram $F: J\longrightarrow Sp_W^{D_0}$ has a limit $X$
in $Sp^{D_0}$. Let $r: (G, V)\longrightarrow (H, V)$ be a restriction map. Let $ev_{(G, V)}: Sp_W^{D_0}\longrightarrow T$ be the evaluation map sending $X$ to $X(G, V)$.
Then we have diagrams $$ev_{(G, V)}\circ F: J\longrightarrow T\mbox{  and  }ev_{(H, V)}\circ F: J\longrightarrow T.$$ The limits of them are $X(G, V)$ and $X(H, V)$ respectively.
For each integer $n$, we have the equivariant
homotopy groups $\pi_n^H(-)$. The diagrams $$\pi_n^H\circ ev_{(G, V)}\circ F\mbox{  and  }\pi_n^H\circ ev_{(H, V)}\circ F$$ are isomorphic. So we have isomorphic limits
$$\pi_n^H(X(G, V))\cong \pi_n^H(X(H, V))\mbox{   for each }n.$$  Thus, $X(G, V)$ and $X(H, V)$ are $H-$weak equivalent. So $X$ is an object in $Sp_W^{D_0}$.

\bigskip
We can prove that $Sp_W^{D_0}$ is cocomplete in a similar way.

\end{proof}

We have a pair of adjoint functors between $Sp_W^{D_0}$ and the category $Sp^O$ of orthogonal spaces. In Section 6.1 \cite{Huanthesis} we discuss a functor $$l: L\longrightarrow
D$$ sending $\mathbb{R}^n$ to $(O(n), \mathbb{R}^n)$. It gives a forgetful functor $U: Sp^{D_0}\longrightarrow Sp^{O}$.
Let $$Q: Sp_W^{D_0}\longrightarrow Sp^{D_0}\longrightarrow Sp^{O}$$ denote the composition of the inclusion and the forgetful functor $U$.

In addition, we can define a functor $P: Sp^{O}\longrightarrow Sp_W^{D_0}$ by sending an orthogonal spectrum $Y$ to the $D_0^W-$spectrum \begin{equation}
P(Y)(G, V):= Y(V); P(Y)(\phi_1, \phi_2):=Y(\phi_2) \label{leftP}\end{equation} for any object $(G, V)$  and any morphism $(\phi_1, \phi_2)$ in $D_0$.

\begin{proposition}

\begin{displaymath}
(P \dashv Q) : Sp^{O} \stackrel{\overset{Q}{\leftarrow}}{\underset{P}{\to}}  Sp_W^{D_0}
\end{displaymath}
is a pair of adjoint functors. In addition, the unit $\eta: Id_X\longrightarrow Q\circ P$ is an isomorphism.
\label{qrtaj}\end{proposition}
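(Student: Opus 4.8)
The plan is to construct a natural bijection $Sp_W^{D_0}(P(Y),X)\cong Sp^O(Y,Q(X))$ and then identify the unit by evaluating it on identity morphisms. The structural fact behind everything is that $P$ is restriction along the forgetful functor $\pi\colon D_0\to L$, $(G,V)\mapsto V$, $(\phi_1,\phi_2)\mapsto\phi_2$, while $Q$ is restriction along $l\colon L\to D_0$, and $\pi\circ l=\mathrm{Id}_L$.

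First I would verify that $P$ is well defined, i.e.\ that $P(Y)$ genuinely lies in $Sp_W^{D_0}$. Give $P(Y)$ the structure maps $\sigma^{P(Y)}_{(G,V),(H,W)}:=\sigma^Y_{V,W}\colon S^W\wedge Y(V)\to Y(V\oplus W)$ of the orthogonal spectrum $Y$, and the $G$-action on $P(Y)(G,V)=Y(V)$ obtained by restricting the $O(V)$-action along $G\le O(V)$. The axioms of Definition \ref{spd0} then follow from the associativity and $O(n)\times O(m)$-equivariance of the maps $\sigma^Y$; $P(Y)$ is a functor on $D_0$ because a morphism of $D_0$ acts on $Y$ only through its isometry part $\phi_2$; and since a restriction map $(G,V)\to(H,V)$ has $\phi_2=\mathrm{Id}_V$, the map $P(Y)\big((G,V)\to(H,V)\big)=Y(\mathrm{Id}_V)$ is the identity, hence in particular an $H$-weak equivalence, so $P(Y)\in Sp_W^{D_0}$. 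Functoriality of $P$ in $Y$ is clear.

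Next, the adjunction. The key geometric input is that every object $(G,V)$ of $D_0$ receives a canonical restriction map $r_{G,V}\colon l(V)=(O(V),V)\to(G,V)$ (it exists because $G\le O(V)$, and $P(Y)(r_{G,V})=Y(\mathrm{Id}_V)=\mathrm{Id}$), and that for every morphism $(\phi_1,\phi_2)\colon(G,V)\to(H,W)$ of $D_0$ the square
\[
\xymatrix{(O(V),V)\ar[r]^{l(\phi_2)}\ar[d]_{r_{G,V}} & (O(W),W)\ar[d]^{r_{H,W}}\\
(G,V)\ar[r]_{(\phi_1,\phi_2)} & (H,W)}
\]
commutes in $D_0$; this is a short computation with the composition law, both composites having isometry part $\phi_2$ and group part the conjugation $x\mapsto\phi_2^{-1}x\phi_2$ on $H\cap O(\phi_2V)$ (the latter using the commutativity of the defining diagram $(\ref{Dmor})$). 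Given $f\colon P(Y)\to X$, restricting $f$ to the objects $l(V)$ and using $Q(X)(V)=X(O(V),V)$ produces a morphism $\bar f\colon Y\to Q(X)$ of orthogonal spectra. Conversely, given $g\colon Y\to Q(X)$, set $\tilde g(G,V):=X(r_{G,V})\circ g(V)$; then naturality of $\tilde g$ over an arbitrary $(\phi_1,\phi_2)$ follows by applying $X$ to the square above and invoking naturality of $g$, $G$-equivariance of $\tilde g(G,V)$ follows by applying $X$ to the identity $r_{G,V}\circ u_g=u_g\circ r_{G,V}$ in $D_0$ (where $u_g$ is the automorphism induced by $g\in G\le O(V)$ on $(O(V),V)$, resp.\ on $(G,V)$), which shows $X(r_{G,V})$ intertwines the two $G$-actions, and compatibility with the structure maps is the analogous diagram chase, using that $g$ commutes with the structure maps and that the $r_{G,V}$ are compatible with them. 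Finally $\overline{\tilde g}=g$ because $r_{O(V),V}=\mathrm{Id}$, and $\widetilde{\bar f}=f$ because naturality of $f$ along $r_{G,V}$ together with $P(Y)(r_{G,V})=\mathrm{Id}$ forces $f(G,V)=X(r_{G,V})\circ f(O(V),V)$; naturality of the resulting bijection in $X$ and $Y$ is routine.

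For the unit, note that under this adjunction $\eta_Y\colon Y\to Q(P(Y))$ is the image of $\mathrm{Id}_{P(Y)}$, whose restriction to the objects $l(V)$ is the family of identities $\mathrm{Id}_{Y(V)}$; since $Q(P(Y))(V)=P(Y)(O(V),V)=Y(V)$ with matching group actions and structure maps (equivalently $Q\circ P=(\pi\circ l)^*=\mathrm{Id}$), we conclude $\eta_Y=\mathrm{Id}_Y$, an isomorphism. I expect the only genuine work to sit in the adjunction step: establishing the commuting square above inside $D_0$ (unwinding the equivalence $D_0\simeq D$ together with the composition formula) and verifying the $G$-equivariance of $X(r_{G,V})$; once that square is available, the remaining verifications are mechanical diagram chases.
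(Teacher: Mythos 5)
Your proposal is correct and follows essentially the same route as the paper: the adjunction is realized by sending $g\colon Y\to Q(X)$ to the maps $X(r_{G,V})\circ g(V)$ using the canonical restriction morphism $(O(V),V)\to(G,V)$ (the paper's $Y(i,\mathrm{Id})\circ f(V)$), with the inverse given by evaluation at $(O(V),V)$, and the unit is the identity because $Q\circ P=\mathrm{Id}$. You simply spell out more of the verifications (the commuting square in $D_0$, equivariance, well-definedness of $P$) that the paper leaves implicit.
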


\begin{proof}
Let $X$ be an object in $Sp^O$ and $Y$ an object in $Sp_W^{D_0}$. We can define a natural isomorphism $F: Hom_{Sp^O}(-, Q(-))\longrightarrow Hom_{Sp_W^{D_0}}(P(-), -)$.

For any functor $f: X\longrightarrow Q(Y)$, let $F(f)(G, V): P(X)(G, V)\longrightarrow Y(G,V)$ be defined by the composition
\begin{equation}\xymatrix{P(X)(O(V), V) \ar[d]_{=} \ar[r]^{f(V)} &Y(O(V), V) \ar[d]_{Y(i, Id)} \\
P(X)(G, V)\ar[r] &Y(G, V)}\end{equation}for any object $(G, V)$ of $D_0$.

In addition, we can define $\alpha: Hom_{Sp_W^{D_0}}(P(-), -)\longrightarrow Hom_{Sp^O}(-, Q(-))$. For any functor $\beta: P(X)\longrightarrow Y$, define
$\alpha(\beta)(V):=\beta(O(V), V)$. 
$\alpha$ is the inverse of $F$.

The composition $Q\circ P$ is the identity. So $\eta$ is an isomorphism.
\end{proof}

\begin{remark}
The adjoint functors $P \dashv Q$ are not equivalences. $P$ is full and faithful but it is not essentially surjective.
 \end{remark}

\begin{example}
Each orthogonal spectrum $Y$ gives a $D_0^W-$spectrum $QY$.

\end{example}

We can also define the equivariant homotopy groups of $D_0^W-$spectrum.

\begin{definition}For each $D_0^W-$spectrum $Y$, \begin{equation}
\pi^G_k(Y):=\pi^G_k(Q(Y)), \label{htpygrpd0}
\end{equation}for any compact Lie group $G$ and any positive integer $k$. \label{globalhtpygrp}\end{definition}

\begin{remark}Note that for any $D_0^W-$spectrum, each restriction map in $D_0$ is mapped to an equivariant weak equivalence. Thus,
the definition in (\ref{htpygrpd0}) contains all the necessary information.

In addition, we can define the equivariant homotopy groups of $D_0^W-$spectrum in the canonical way as those of orthogonal spectra. We describe the construction
explicitly. Let $s(\mathcal{U}_G)$ denote the poset, under inclusion of finite-dimensional $G-$subrepresentations of the chosen complete $G-$universe $\mathcal{U}_G$.
Let $X$ denote a $D_0^W-$spectrum $Y$. If $\psi=(Id, \psi_2): (G, V)\longrightarrow  (G, W)$ is a morphism in $D_0$ and
$f: S^V\longrightarrow X(G, V)$ a continuous based map, we define
$\psi_*f: S^W\longrightarrow X(G, W)$ as the composite
$$\begin{CD}\psi_*f: S^W\cong S^V\wedge
S^{W-\psi_2(V)}  @>{f\wedge S^{W-\psi_2(V)}}>>   X(G, V)\wedge
S^{W-\psi_2(V)}  @>{\psi_{2*}}>>  X(G, W).\end{CD}$$

We obtain a functor from the poset
$s(\mathcal{U}_G)$ to sets by sending $V\in s(\mathcal{U}_G)$ to
$$[S^V, X(G, V)]^G,$$ the set of $G-$equivariant homotopy classes of
based $G-$maps from $S^V$ to $X(G, V)$.  For $V\subseteq W$ in
$s(\mathcal{U}_G)$, the inclusion $i: V\longrightarrow W$ is sent
to the map $$[S^V, X(G, V)]^G\longrightarrow [S^W, X(G, W)]^G, \mbox{
}[f]\mapsto [i_*f].$$

The 0-th equivariant homotopy group $\pi^G_0(X)$ is then defined
as $$\pi^G_0(X)=\mbox{colim}_{V\in s(\mathcal{U}_G)}[S^V,
X(G, V)]^G.$$

Let $k$ be a positive integer. Define
\begin{align}\pi^G_k(X)&:=colim_{V\in s(\mathcal{U}_G)}[S^{V\oplus \mathbb{R}^k}, X(G, V)]^G \\ \pi^G_{-k}(X)&:=
colim_{V\in s(\mathcal{U}_G)}[S^{V},  X(G, V\oplus \mathbb{R}^k)]^G.\end{align}
\end{remark}

Moreover, we define and study the unitary version of $Sp_W^{D_0}$.

\begin{definition} A $D_0^{\mathbb{C}}-$space is a continuous functor from the category $D_0^{\mathbb{C}}$ to the category of spaces.
A morphism of $D_0^{\mathbb{C}}-$spaces is a natural
transformation of functors. We denote the category of
$D_0^{\mathbb{C}}-$spaces by $D_0^{\mathbb{C}}T$.
\end{definition}

$D^{\mathbb{C}}_0-$spectra are the stabilization of $D^{\mathbb{C}}_0-$spaces.

\begin{definition}
[The category $Sp_W^{D_0, U}$] A $D^{\mathbb{C}, W}_0-$spectrum $X$ is both a $D^{\mathbb{C}}_0-$spectrum and a $D^{\mathbb{C}}_0-$space in $D^{\mathbb{C}}_0T^W$. The category $Sp^{D_0, U}_{W}$ is the full
subcategory of $Sp^{D, U}$ consisting of $D^{\mathbb{C}, W}-$spectra.
\label{spd0w2}\end{definition}

\section{Global Model Structure}\label{modelgl}

In this section we will lift the model structures on orthogonal spectra in \cite{SS} to $Sp^{D_0}_W$.

The transfer theorem that we will use is Theorem 5.1, Chapter II, \cite{GPJJ}. We recall it below.

\begin{theorem}Let \begin{displaymath}
(F \dashv G) : \mathcal{C} \stackrel{\overset{G}{\leftarrow}}{\underset{F}{\to}}  \mathcal{D}
\end{displaymath}
be an adjoint pair and suppose $\mathcal{C}$ is a cofibrantly generated model category. Let $I$ and $J$ be chosen sets of generating
cofibrations and acyclic cofibrations, respectively.  Define a morphism $f: X\longrightarrow Y$ in $\mathcal{D}$ to be a weak equivalence or a fibration if $Gf$
is a weak equivalence or fibration in $\mathcal{C}$. Suppose further that

(1) the right adjoint $G: \mathcal{D}\longrightarrow \mathcal{C}$ commutes with sequential colimits; and

(2) every cofibration in $\mathcal{D}$ with the left lifting property with respect to all fibrations is a weak equivalence.

Then $\mathcal{D}$ becomes a cofibrantly generated model category. Furthermore the sets $\{Fi | i\in I\}$ and $\{Fj | j\in J\}$ generate the cofibrations and the acyclic cofribrations of $\mathcal{D}$ respectively.

\label{liftp}
\end{theorem}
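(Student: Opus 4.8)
The plan is to verify the model-category axioms on $\mathcal{D}$ directly, obtaining the two functorial factorizations from the proposed generating sets $FI=\{Fi\mid i\in I\}$ and $FJ=\{Fj\mid j\in J\}$ via the small object argument, and leaving everything else to the definitions. Since weak equivalences and fibrations of $\mathcal{D}$ are defined by reflection along $G$ and cofibrations by the left lifting property against acyclic fibrations, the $2$-out-of-$3$ axiom, the retract axiom, and one half of the lifting axioms are immediate; bicompleteness of $\mathcal{D}$ is assumed (in our application it is Lemma \ref{cococo}). The first thing to record is that $FI$ and $FJ$ permit the small object argument. The domains of the maps in $I$ and $J$ are small relative to $I$-cell and $J$-cell in $\mathcal{C}$; since $F$ is a left adjoint and, by hypothesis~(1), $G$ commutes with sequential colimits, the natural isomorphism $\mathrm{Hom}_{\mathcal{D}}(Fc,-)\cong\mathrm{Hom}_{\mathcal{C}}(c,G(-))$ transports this smallness to the domains of $FI$ and $FJ$ relative to the corresponding cell complexes in $\mathcal{D}$. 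Applying the small object argument then yields functorial factorizations $f=(FI\text{-inj})\circ(FI\text{-cell})$ and $f=(FJ\text{-inj})\circ(FJ\text{-cell})$ for every morphism $f$ of $\mathcal{D}$.

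Next I would identify the injective classes using the adjunction: a map $f$ has the right lifting property against $Fj$ iff $Gf$ has it against $j$, so $f$ is $FJ$-injective iff $Gf$ is $J$-injective iff $Gf$ is a fibration, i.e.\ iff $f$ is a fibration in $\mathcal{D}$; the same reasoning with $I$ shows that $f$ is $FI$-injective iff $Gf$ is an acyclic fibration in $\mathcal{C}$, which is precisely the condition that $f$ be an acyclic fibration in $\mathcal{D}$. Consequently every $FI$-cell and every $FJ$-cell is a cofibration (each has the left lifting property against the acyclic fibrations), and the factorization $f=(FI\text{-inj})\circ(FI\text{-cell})$ is already the desired ``cofibration followed by acyclic fibration'' factorization.

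For the ``acyclic cofibration followed by fibration'' factorization I would use $f=(FJ\text{-inj})\circ(FJ\text{-cell})$: the right factor is a fibration by the previous paragraph and the left factor is a cofibration, so it remains only to see the left factor is a weak equivalence. This is exactly where hypothesis~(2) enters: an $FJ$-cell has the left lifting property against every $FJ$-injective map, hence against every fibration, and being a cofibration with this property it is a weak equivalence by~(2). The lifting axioms are then completed by the standard retract arguments: a cofibration $f$ factors through an $FI$-cell with acyclic-fibration second factor, lifts against it, and is therefore a retract of the $FI$-cell; an acyclic cofibration $f$ factors as $(\text{fibration})\circ(FJ\text{-cell})$, the fibration is acyclic by $2$-out-of-$3$, $f$ lifts against it, and so $f$ is a retract of the $FJ$-cell and inherits the left lifting property against all fibrations. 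Conversely, a cofibration with that lifting property is a weak equivalence by~(2), so the acyclic cofibrations are exactly the cofibrations with the left lifting property against all fibrations. This exhibits $\mathcal{D}$ as a cofibrantly generated model category with $FI$ and $FJ$ as generating cofibrations and generating acyclic cofibrations.

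I expect the only genuinely delicate point to be the argument that $FJ$-cells are weak equivalences: it cannot be derived formally and must invoke hypothesis~(2), and to apply~(2) one must have already identified ``fibration'' with ``$FJ$-injective'' (through the adjunction) before concluding that $FJ$-cells lift against \emph{all} fibrations. Ordering these identifications correctly is what keeps the argument non-circular. Hypothesis~(1) plays no role beyond licensing the small object argument in $\mathcal{D}$.
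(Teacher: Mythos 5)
This theorem is not proved in the paper at all: it is quoted verbatim from Goerss--Jardine (Theorem 5.1, Chapter II of \cite{GPJJ}) and used as a black box, so there is no ``paper's proof'' to compare against. Your argument is the standard proof of that transfer theorem, and its architecture is correct: create weak equivalences and fibrations by reflection along $G$, define cofibrations by lifting, identify $FJ\text{-inj}$ with fibrations and $FI\text{-inj}$ with acyclic fibrations through the adjunction $\mathrm{Hom}_{\mathcal{D}}(Fc,-)\cong\mathrm{Hom}_{\mathcal{C}}(c,G(-))$, run the small object argument on $FI$ and $FJ$, invoke hypothesis (2) exactly once to see that relative $FJ$-cell complexes are weak equivalences, and finish the lifting axioms by the retract argument. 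You also correctly locate where circularity could creep in (the identification of fibrations with $FJ$-injectives must precede the appeal to (2)).

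The one step I would not wave through is the smallness transport. You assert that smallness of the domains of $I$ relative to $I$-cell passes to smallness of the domains of $FI$ relative to $FI$-cell. The adjunction plus hypothesis (1) gives
\[
\mathrm{Hom}_{\mathcal{D}}(Fc,\operatorname{colim}_\beta X_\beta)\cong\mathrm{Hom}_{\mathcal{C}}(c,\operatorname{colim}_\beta GX_\beta),
\]
but to pull the colimit out of the right-hand side you need $c$ to be small with respect to the sequence $\{GX_\beta\}$, and $G$ of a relative $FI$-cell complex need not be a relative $I$-cell complex in $\mathcal{C}$. This is harmless under Goerss--Jardine's conventions, where the generating (acyclic) cofibrations have domains small with respect to \emph{all} sequential colimits --- which is precisely why hypothesis (1) is phrased for arbitrary sequential colimits --- but with the weaker Hovey/Hirschhorn notion of smallness (relative to cell complexes only) the sentence as written is a genuine gap and needs either the stronger smallness hypothesis or an additional assumption on $G$. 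Everything else in your write-up is sound.
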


We describe a strong level model structure on $Sp^{D_0}_W$.
\begin{definition}A map $f: X\longrightarrow Y$ in $Sp_W^{D_0}$ is
called a

-strong level equivalence if $Q(f)$ is a strong level equivalence in the category of orthogonal spectra, i.e. for each nonnegative  integer $n$ and each closed subgroup $H$ of $O(n)$,
$X(O(n), \mathbb{R}^n)^H\longrightarrow Y(O(n), \mathbb{R}^n)^H$ is a weak equivalence.

-strong level fibration if $Q(f)$ is a strong level fibration in the category of orthogonal spectra, i.e.  for each nonnegative integer $n$,
$X(O(n), \mathbb{R}^n)\longrightarrow Y(O(n), \mathbb{R}^n)$ is a $O(n)-$fibration.

-strong level cofibration if it is a morphism with the left lifting property with respect to all acyclic strong level fibrations.
 \label{strmodel}
\end{definition}

\begin{proposition}The strong level equivalences, strong level fibrations and the strong level cofibrations form a model structure, the strong level model structure
on $Sp^{D_0}_W$. It is cofibrantly generated. It is Quillen equivalent the strong level model structure on orthogonal spectra
in Proposition 4.3.7, \cite{SS}. \label{strlevel}\end{proposition}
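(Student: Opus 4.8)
The plan is to obtain the strong level model structure on $Sp^{D_0}_W$ by transporting Schwede's strong level model structure on $Sp^{O}$ (Proposition 4.3.7, \cite{SS}), which is cofibrantly generated, across the adjunction $(P\dashv Q)$ of Proposition \ref{qrtaj}, using the transfer theorem, Theorem \ref{liftp}, with $\mathcal{C}=Sp^{O}$, $\mathcal{D}=Sp^{D_0}_W$, $F=P$ and $G=Q$. By Lemma \ref{cococo} the category $Sp^{D_0}_W$ is complete and cocomplete, and Definition \ref{strmodel} declares a map $f$ to be a strong level equivalence, respectively a strong level fibration, exactly when $Q(f)$ is one in $Sp^{O}$; this is precisely the transferred notion. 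So the work is to verify the two hypotheses of Theorem \ref{liftp} and then to check the Quillen equivalence.

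For hypothesis (1) I would argue that $Q$ preserves all colimits, in particular sequential ones: $Q$ is the composite of the inclusion $Sp^{D_0}_W\hookrightarrow Sp^{D_0}$, which preserves colimits because (as in the proof of Lemma \ref{cococo}) colimits in $Sp^{D_0}_W$ are created in $Sp^{D_0}$, followed by the forgetful functor $U\colon Sp^{D_0}\longrightarrow Sp^{O}$ induced by restriction along $l$, which is computed levelwise and hence preserves all colimits. This cocontinuity of $Q$, together with the identity $Q\circ P=\mathrm{Id}$ from Proposition \ref{qrtaj}, is also the engine of hypothesis (2).

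For hypothesis (2), let $f\colon X\longrightarrow Y$ be a cofibration of $Sp^{D_0}_W$ with the left lifting property against every fibration, and let $J$ be a set of generating acyclic cofibrations for $Sp^{O}$. Running the small object argument on $\{Pj\mid j\in J\}$ factors $f$ as $X\longrightarrow Z\longrightarrow Y$ through a relative $\{Pj\}$-cell complex $i\colon X\longrightarrow Z$ and a map $p\colon Z\longrightarrow Y$ with the right lifting property against all $Pj$; by adjunction $Q(p)$ has the right lifting property against all $j\in J$, so $Q(p)$ is a strong level fibration, i.e.\ $p$ is a fibration. Since $f$ lifts against $p$, the retract argument exhibits $f$ as a retract of $i$. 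Applying $Q$ and using $Q(Pj)=j$ together with the cocontinuity of $Q$, the map $Q(i)$ is a relative $J$-cell complex in $Sp^{O}$, hence an acyclic cofibration, hence a strong level equivalence; therefore $Q(f)$, a retract of $Q(i)$, is a strong level equivalence, i.e.\ $f$ is one. Theorem \ref{liftp} then yields a cofibrantly generated model structure on $Sp^{D_0}_W$ with generating cofibrations $\{Pi\}$ and generating acyclic cofibrations $\{Pj\}$; its cofibrations, being the maps with the left lifting property against acyclic strong level fibrations, are exactly the strong level cofibrations of Definition \ref{strmodel}.

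Finally, $(P\dashv Q)$ is a Quillen pair because $Q$ carries fibrations to fibrations and acyclic fibrations to acyclic fibrations by construction. For the Quillen equivalence I would invoke the standard criterion: given $X\in Sp^{O}$, $Y\in Sp^{D_0}_W$ and $\phi\colon PX\longrightarrow Y$, its adjunct is $\phi^{\flat}=Q(\phi)\circ\eta_X\colon X\longrightarrow QY$, and $\eta_X$ is an isomorphism by Proposition \ref{qrtaj}; hence $\phi^{\flat}$ is a strong level equivalence if and only if $Q(\phi)$ is, which by the very definition of strong level equivalences in $Sp^{D_0}_W$ happens if and only if $\phi$ is. I expect the only genuinely delicate step to be hypothesis (2) of Theorem \ref{liftp}, and even that reduces, as sketched, to the two structural facts $Q\circ P=\mathrm{Id}$ and the cocontinuity of $Q$, both already in hand, so no real obstacle should remain.
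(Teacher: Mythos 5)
Your proposal is correct and follows the same overall architecture as the paper: both obtain the strong level structure by transferring Schwede's cofibrantly generated strong level model structure on $Sp^{O}$ along the adjunction $(P\dashv Q)$ of Proposition \ref{qrtaj} via Theorem \ref{liftp}, and both dispose of hypothesis (1) by observing that $Q$ is a forgetful functor between diagram spectra and hence cocontinuous. Where you genuinely diverge is in the verification of hypothesis (2). You argue via the small object argument and the retract argument: a map with the left lifting property against all fibrations is a retract of a relative $\{Pj\}$-cell complex $i$, and $Q\circ P=\mathrm{Id}$ together with cocontinuity of $Q$ shows $Q(i)$ is a relative $J$-cell complex in $Sp^{O}$, hence a strong level equivalence. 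The paper instead uses the path-object criterion: in the strong level structure every object of $Sp^{D_0}_W$ is fibrant (all spaces are fibrant), so the fibrant replacement functor is the identity, and the functorial path object $Y^I(G,V):=Y(G,V)^I$ supplies what is needed for hypothesis (2). The paper's route is shorter and sidesteps any smallness hypotheses on the domains of the $Pj$, which your small object argument tacitly requires and which you should at least flag (they do hold here, being inherited from the generating sets for $Sp^{O}$ since $Q$ preserves the relevant colimits); your route has the compensating merit of making the generating sets $\{Pi\}$ and $\{Pj\}$ and their role explicit. You also supply the argument for the Quillen equivalence --- reducing the comparison of adjuncts to the fact that the unit $\eta$ is an isomorphism and that weak equivalences in $Sp^{D_0}_W$ are created by $Q$ --- which the paper asserts but does not write out; your argument there is the right one.
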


\begin{proof}
Since $Q$ is the forgetful functor between diagram spectra, it commutes with colimits.

Since all the spaces are fibrant, all the objects in $Sp^{O}$ are fibrant in the strong level model structure. The terminal object in $Sp^{D_0}_W$ is the constant $D_0^W-$spectra.
$Y$ is a fibrant object in $Sp^{D_0}_W$ if and only if $Q(Y)$ is fibrant in $Sp^{O}$. The fibrant replacement functor in $Sp^{D_0}_W$ is the identity functor.
In addition, $Sp^{D_0}_W$ has functorial path objects. For each object $Y$, $Y^I$ is defined by \begin{equation}Y^I(G, V):=Y(G, V)^I. \end{equation}

Since the strong level model structure on orthogonal spectra is cofibrantly generated, by Theorem \ref{liftp}, the strong level equivalences, strong level fibrations and the strong level cofibrations form a model structure, which is cofibrantly generated.
 \end{proof}

 We also describe a global model structure on $Sp^{D_0}_W$.
\begin{definition}A map $f: X\longrightarrow Y$ in $Sp_W^{D_0}$ is
called a

-global equivalence if $Q(f)$ is a global equivalence in the category of orthogonal spectra, i.e. for each nonnegative  integer $n$ and each closed subgroup $H$ of $O(n)$,
$\pi^G_k(f): \pi^G_k(X)\longrightarrow \pi^G_k(Y)$ is a weak equivalence where $\pi^G_k(-)$ is the equivariant homotopy groups in Definition \ref{globalhtpygrp}.

-global fibration if $Q(f)$ is a global fibration in the category of orthogonal spectra, i.e. $f$ is strong level fibration and for each object $d=(O(V), V)$, $b=(O(W),
W)$ of $D$, any  closed subgroup $G$ of both $O(V)$ and $O(W)$,
the square \begin{equation}\xymatrix{X(d)^G\ar[d]^{f(d)^G} \ar[r]^>>>>>{\widetilde{\sigma}_{d, b}} &\mbox{map}^G(S^W, X(G, V\oplus W))\ar[d]^{\mbox{map}^G(S^W, f(G, V\oplus W))}\\
Y(d)^G\ar[r]_>>>>>{\widetilde{\sigma}_{d, b}} &\mbox{map}^G(S^W, Y(G, V\oplus W))} \label{glfib}\end{equation}
is a homotopy cartesian.

-global cofibration if it is a morphism with the left lifting property with respect to all acyclic global fibrations.
 \label{globalmodelsp}
\end{definition}

\begin{theorem}The global equivalences,  global fibrations and global cofibrations form a model structure, the strong level model structure
on $Sp^W_{D}$. It is cofibrantly generated. It is Quillen equivalent the global model structure on orthogonal spectra
in Theorem
4.3.18 in \cite{SS}. \label{globalmodelpf}\end{theorem}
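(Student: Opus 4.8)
The plan is to obtain the model structure of Definition \ref{globalmodelsp} by transferring the global model structure on orthogonal spectra (Theorem 4.3.18 of \cite{SS}, which is cofibrantly generated) along the adjunction $(P\dashv Q)$ of Proposition \ref{qrtaj}, using the lifting theorem recalled as Theorem \ref{liftp} from \cite{GPJJ}. By the very definitions, a morphism $f$ of $Sp^{D_0}_W$ is a global equivalence, resp. a global fibration, exactly when $Q(f)$ is one in $Sp^O$; for the fibrations one checks that the square condition \eqref{glfib} is precisely the unwinding of Schwede's characterization of global fibrations of orthogonal spectra applied to $Q(f)$. Thus the weak equivalences and fibrations of Definition \ref{globalmodelsp} are exactly the ones created by the transfer construction, and it remains to verify the two hypotheses of Theorem \ref{liftp} with $\mathcal{C}=Sp^O$ and $\mathcal{D}=Sp^{D_0}_W$.

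Hypothesis (1) is immediate: by Lemma \ref{cococo} colimits in $Sp^{D_0}_W$ are created in $Sp^{D_0}$, and, as already observed in the proof of Proposition \ref{strlevel}, the forgetful functor $Q$ between diagram spectra commutes with sequential (indeed all) colimits. For Hypothesis (2) I would argue as follows. Let $f\colon X\to Y$ in $Sp^{D_0}_W$ have the left lifting property against all global fibrations. Running the small object argument for the set $\{Pj\}$, with $j$ ranging over the generating acyclic cofibrations of the global model structure on $Sp^O$, factor $f=p\circ i$ with $i$ a relative $\{Pj\}$-cell complex and $p$ having the right lifting property against $\{Pj\}$; by adjunction the latter says $Q(p)$ has the right lifting property against $\{j\}$, i.e. $Q(p)$ is a global fibration, so $p$ is a global fibration of $Sp^{D_0}_W$. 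Since $f$ lifts against all global fibrations, the evident square with $f$ on the left, $p$ on the right, $i$ across the top and $\mathrm{id}_Y$ across the bottom admits a lift, exhibiting $f$ as a retract of $i$. Now $Q$ preserves colimits and $Q\circ P=\mathrm{Id}$ by Proposition \ref{qrtaj}, so $Q(i)$ is a relative $\{j\}$-cell complex in $Sp^O$; as acyclic cofibrations of the global model structure on $Sp^O$ are closed under cobase change and transfinite composition, $Q(i)$ is a global equivalence, hence so is its retract $Q(f)$. (The small object argument applies because the domains of the $Pi$ and $Pj$ are small in $Sp^{D_0}_W$: smallness is preserved by the left adjoint $P$ since $Q$ preserves filtered colimits.) Theorem \ref{liftp} then gives that $Sp^{D_0}_W$ is a cofibrantly generated model category with the global equivalences, global fibrations and global cofibrations of Definition \ref{globalmodelsp}, generated by $\{Pi\}$ and $\{Pj\}$.

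It remains to see that $(P\dashv Q)$ is a Quillen equivalence. It is a Quillen pair because $Q$ preserves fibrations and acyclic fibrations by definition. To promote this to a Quillen equivalence I would use the standard recognition criterion: since $Q$ reflects global equivalences (this is how the weak equivalences of $Sp^{D_0}_W$ were defined), it suffices to show that for each cofibrant $X$ in $Sp^O$ the composite $X\xrightarrow{\eta_X}QPX\xrightarrow{Q(\lambda)}Q(RPX)$ is a global equivalence, where $\lambda\colon PX\to RPX$ is a fibrant replacement in $Sp^{D_0}_W$. By Proposition \ref{qrtaj} the unit $\eta_X$ is an isomorphism, and $\lambda$ is an acyclic cofibration of $Sp^{D_0}_W$, so $Q(\lambda)$ is a global equivalence by the definition of the weak equivalences; hence the composite is a global equivalence and $(P\dashv Q)$ is a Quillen equivalence between $Sp^O$ with its global model structure and $Sp^{D_0}_W$.

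The main obstacle is Hypothesis (2) of the transfer theorem — the small-object-argument-plus-retract argument together with the smallness bookkeeping — while everything else is formal once one has $Q\circ P=\mathrm{Id}$ (Proposition \ref{qrtaj}), cocompleteness of $Sp^{D_0}_W$ (Lemma \ref{cococo}), and the identification of the global fibrations of Definition \ref{globalmodelsp} with the maps $f$ for which $Q(f)$ is a global fibration of orthogonal spectra.
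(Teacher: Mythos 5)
Your proposal is correct and follows the same overall strategy as the paper---transferring the cofibrantly generated global model structure on $Sp^{O}$ along $(P\dashv Q)$ via Theorem \ref{liftp}---but you verify hypothesis (2) by a genuinely different argument. The paper checks (2) by the Quillen path-object criterion: it produces a fibrant replacement functor $Y\mapsto P(R(QY))$ on $Sp^{D_0}_W$ (using $Q\circ P=\mathrm{Id}$) together with functorial path objects for fibrant objects, which is a standard sufficient condition for (2). You instead verify (2) directly: factor a map with the left lifting property against all global fibrations through a relative $\{Pj\}$-cell complex by the small object argument, apply the retract argument, and use that $Q$ carries relative $\{Pj\}$-cell complexes to relative $\{j\}$-cell complexes, hence to global equivalences; this requires the smallness bookkeeping you supply, whereas the paper's route requires exhibiting the path objects (which it does only in passing). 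A genuine improvement in your write-up is that you actually prove the Quillen equivalence assertion---via the isomorphism of the unit from Proposition \ref{qrtaj} and the fact that $Q$ reflects global equivalences by construction---whereas the paper's proof stops after establishing the model structure and never addresses that part of the statement. Both arguments lean on the same unexamined point: that colimits in $Sp^{D_0}_W$ (in particular the pushouts along $Pj$ and the transfinite compositions arising in the small object argument) are created in $Sp^{D_0}$, i.e.\ that $Sp^{D_0}_W$ is closed under these colimits; Lemma \ref{cococo} asserts cocompleteness only sketchily, so if you want a fully rigorous account you should verify closure under the specific cell attachments you use, e.g.\ by a gluing-lemma argument for pushouts along the levelwise cofibrations $Pj$.
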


\begin{proof}
Since $Q$ is the forgetful functor between diagram spectra, it commutes with colimits.

An object $Y$ in $Sp^{D_0}_W$ is fibrant if and only if $Q(Y)$ is fibrant in the global model structure of $Sp^{O}$. Let $R$ denote the fibrant replacement functor in $Sp^{O}$.
Then the functor \begin{equation}Y\mapsto P(R(QY)) \end{equation} is the fibrant replacement functor in $Sp^{D_0}_W$. Note that since $Q\circ P$ is the identity functor, $R(QY)$ is fibrant if and only if $P(R(QY))$ is fibrant.
In addition, $Sp^{D_0}_W$ has functorial path objects for fibrant objects.

Since the global model structure on orthogonal spectra is cofibrantly generated, by Theorem \ref{liftp}, the  global equivalences, global fibrations and global cofibrations form a model structure, which is cofibrantly generated.
 \end{proof}

\section{$\mathcal{F}-$global model structure}\label{dfspmodel}
Next we show our thoughts on the lift of $\mathcal{F}-$global model structure with $\mathcal{F}$ a global family of compact Lie groups, i.e. a non-empty class of compact Lie groups that is closed under isomorphisms, closed subgroups and quotient groups.

\begin{definition}Let $D^{\mathcal{F}}_0$ denote the full subcategory of $D_0$ with objects $(G, V)$ with $G\in\mathcal{F}$. \end{definition}
We use the symbol $D_0^{\mathcal{F}}T$ to denote the category of $D_0^{\mathcal{F}}-$spaces and $Sp^{D_0^{\mathcal{F}}}$ to denote the category of $D_0^{\mathcal{F}}-$spectra.

\begin{definition}[The category $D_0^{\mathcal{F}}T^W$]Define $D_0^{\mathcal{F}}T^W$ to be the full category of $D_0^{\mathcal{F}}T$ consisting of those objects $X: D_0^{\mathcal{F}}\longrightarrow T$ that maps each restriction map $(G, V)\longrightarrow (H, V)$ to an $H-$weak equivalence. We call the objects $D_0^{\mathcal{F}, W}-$spaces. \label{defd0wtf} \end{definition}

\begin{definition}
[The category $Sp_W^{D_0^{\mathcal{F}}}$] A $D_0^{\mathcal{F}, W}-$spectrum $X$ is both a $D_0^{\mathcal{F}}-$spectrum and a $D_0^{\mathcal{F}, W}-$space. The category $Sp_W^{D_0^{\mathcal{F}}}$ is the full
subcategory of $Sp^{D_0^{\mathcal{F}}}$ consisting of $D_0^{\mathcal{F}, W}-$spectra.
\label{spd0wf}\end{definition}

We have the question whether there is a model structure on $Sp_W^{D_0^{\mathcal{F}}}$ Quillen equivalent to that $\mathcal{F}-$global model structure
on orthogonal spectra
in Theorem 4.3.17 \cite{SS}. Below we show that when $\mathcal{F}$ consists of finite groups, we have a global model structure on $Sp_W^{D_0^{\mathcal{F}}}$.

We have a pair of adjoint functors between $Sp_W^{D_0^{\mathcal{F}}}$ and $Sp^O$. We have a functor $$l_F: L\longrightarrow
D_0^{\mathcal{F}}$$ sending $\mathbb{R}^n$ to $(\Sigma_n, \mathbb{R}^n)$. It gives a forgetful functor $U_F: Sp^{D_0^{\mathcal{F}}}\longrightarrow Sp^{O}$.
Let $$Q_F: Sp_W^{D_0^{\mathcal{F}}}\longrightarrow Sp^{D_0^{\mathcal{F}}}\longrightarrow Sp^{O}$$ denote the composition of the inclusion and the forgetful functor $U_F$.
And we can define a functor $P_F: Sp^{O}\longrightarrow Sp_W^{D_0^{\mathcal{F}}}$ by sending an orthogonal spectrum $Y$ to the $D_0^{\mathcal{F}, W}-$spectrum \begin{equation}
P_F(Y)(G, V):= Y(V); P_F(Y)(\phi_1, \phi_2):=Y(\phi_2) \label{leftPF}\end{equation} for any object $(G, V)$  and any morphism $(\phi_1, \phi_2)$ in $D_0^{\mathcal{F}}$.

\begin{lemma}
\begin{displaymath}
(P_F \dashv Q_F) : Sp^{O} \stackrel{\overset{Q_F}{\leftarrow}}{\underset{P_F}{\to}}  Sp_W^{D_0^{\mathcal{F}}}
\end{displaymath}
is a pair of adjoint functors. In addition, the unit $\eta^F: Id_X\longrightarrow Q_F\circ P_F$ is an isomorphism.
\label{adjpqf}
 \end{lemma}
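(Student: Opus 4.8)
The plan is to mimic the proof of Proposition \ref{qrtaj} verbatim, replacing $P, Q$ by $P_F, Q_F$ and $O(V)$ by $\Sigma_{\dim V}$ throughout; the content of that proof carries over with no essential change because the argument never used anything about $O(V)$ beyond the fact that it is a distinguished closed subgroup of itself that is the value of the structure functor $l$ on $\mathbb{R}^n$. First I would check that $P_F$ is well defined, i.e.\ that $P_F(Y)$ actually lies in $Sp_W^{D_0^{\mathcal{F}}}$: for an orthogonal spectrum $Y$ and a restriction map $(G,V)\longrightarrow (H,V)$ in $D_0^{\mathcal{F}}$, the induced map $P_F(Y)(G,V) = Y(V) \longrightarrow Y(V) = P_F(Y)(H,V)$ is the identity (since $P_F(Y)(\phi_1,\phi_2) = Y(\phi_2)$ and $\phi_2 = \mathrm{Id}$ for a restriction map), hence certainly an $H$-weak equivalence, so $P_F(Y)$ is a $D_0^{\mathcal{F},W}$-space; the structure maps of $P_F(Y)$ are those of $Y$ transported along $\mathbb{R}^n \mapsto (\Sigma_n, \mathbb{R}^n)$, and the required identities for a $D_0^{\mathcal{F}}$-spectrum follow from the corresponding identities for the orthogonal spectrum $Y$. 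One must also confirm $P_F$ is functorial in $Y$, which is immediate.

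Next I would exhibit the adjunction isomorphism $F\colon \mathrm{Hom}_{Sp^O}(X, Q_F(Y)) \longrightarrow \mathrm{Hom}_{Sp_W^{D_0^{\mathcal{F}}}}(P_F(X), Y)$ together with its inverse $\alpha$, exactly as in Proposition \ref{qrtaj}. Given $f\colon X \longrightarrow Q_F(Y)$ and an object $(G,V)$ of $D_0^{\mathcal{F}}$, set $F(f)(G,V)$ to be the composite
\begin{equation}
P_F(X)(\Sigma_{\dim V}, V) \xrightarrow{\;f(V)\;} Y(\Sigma_{\dim V}, V) \xrightarrow{\;Y(i,\mathrm{Id})\;} Y(G,V),
\end{equation}
using that $P_F(X)(\Sigma_{\dim V}, V) = X(V) = P_F(X)(G,V)$ and that $i\colon G \hookrightarrow \Sigma_{\dim V}$ is, up to the identification coming from $l_F$, a restriction map $(\Sigma_{\dim V}, V) \longrightarrow (G, V)$ in $D_0^{\mathcal{F}}$ — here one uses that $\mathcal{F}$ is closed under closed subgroups, so that $(G,V)$ with $G \le \Sigma_{\dim V}$ indeed lies in $D_0^{\mathcal{F}}$. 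In the other direction, for $\beta\colon P_F(X) \longrightarrow Y$ put $\alpha(\beta)(V) := \beta(\Sigma_{\dim V}, V)$, which is a map $X(V) \longrightarrow Q_F(Y)(V)$. One then checks $\alpha(\beta)$ respects the structure maps (this reduces to the fact that $\beta$ does, evaluated on the objects $(\Sigma_n, \mathbb{R}^n)$) and that $F$ and $\alpha$ are mutually inverse and natural in both variables. Naturality and the inverse property are formal diagram chases identical to those implicit in Proposition \ref{qrtaj}.

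Finally, the composite $Q_F \circ P_F$ is the identity functor on $Sp^O$: on objects, $(Q_F \circ P_F)(Y)(V) = P_F(Y)(\Sigma_{\dim V}, V) = Y(V)$, the $O(\dim V)$-action and structure maps agree by construction, and on morphisms $Q_F(P_F(Y))(\phi_2) = P_F(Y)(\mathrm{Id},\phi_2) = Y(\phi_2)$; hence the unit $\eta^F\colon \mathrm{Id} \longrightarrow Q_F \circ P_F$ is the identity natural transformation, in particular an isomorphism. I do not anticipate a genuine obstacle here; the only point requiring a moment's care is the bookkeeping around $l_F$, namely that $\mathbb{R}^n$ is sent to $(\Sigma_n, \mathbb{R}^n)$ rather than $(O(n), \mathbb{R}^n)$ and that $\Sigma_n$ and all its subgroups lie in $\mathcal{F}$, so that the object $(\Sigma_{\dim V}, V)$ and the restriction maps out of it used above all make sense inside $D_0^{\mathcal{F}}$.
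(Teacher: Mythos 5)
The paper states this lemma without proof, so the only yardstick is the intended argument, which is clearly the one you follow: transcribe the proof of Proposition \ref{qrtaj} with $(O(V),V)$ replaced by $(\Sigma_{\dim V},V)$. Your verifications that $P_F(Y)$ lies in $Sp_W^{D_0^{\mathcal{F}}}$ and that $Q_F\circ P_F$ is the identity are fine. But the central step of the transcription fails. In $D_0$ a restriction map $(G,V)\to(H,V)$ (with $\phi_2=\mathrm{Id}$) exists precisely when $H\subseteq G$ as closed subgroups of $O(V)$; this is what diagram (\ref{simplifyD}) encodes. In Proposition \ref{qrtaj} every object $(G,V)$ receives the restriction map $(O(V),V)\to(G,V)$ because $G\le O(V)$ by definition, and that map is exactly what produces $Y(i,\mathrm{Id})\colon Y(O(V),V)\to Y(G,V)$ in the definition of $F(f)(G,V)$. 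In $D_0^{\mathcal{F}}$ with $\mathcal{F}$ the finite groups, an object $(G,V)$ is an arbitrary finite closed subgroup $G\le O(V)$, and there is no reason for $G$ to be contained in the permutation subgroup $\Sigma_{\dim V}$. For example $(\mathbb{Z}/3,\mathbb{R}^2)$ with $\mathbb{Z}/3$ acting by rotations is an object of $D_0^{\mathcal{F}}$, but $\mathbb{Z}/3\not\subseteq\Sigma_2\subset O(2)$, so there is no restriction map $(\Sigma_2,\mathbb{R}^2)\to(\mathbb{Z}/3,\mathbb{R}^2)$ and the composite you use to define $F(f)(\mathbb{Z}/3,\mathbb{R}^2)$ does not exist. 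Your parenthetical remark that ``$\mathcal{F}$ is closed under closed subgroups, so that $(G,V)$ with $G\le\Sigma_{\dim V}$ lies in $D_0^{\mathcal{F}}$'' addresses the wrong inclusion: the issue is not whether subgroups of $\Sigma_{\dim V}$ belong to $\mathcal{F}$, but that most objects of $D_0^{\mathcal{F}}$ have $G\not\le\Sigma_{\dim V}$.

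The same failure hits the other half of the bijection: in Proposition \ref{qrtaj} the injectivity of $\alpha$ rests on the fact that $\beta(G,V)$ is determined by $\beta(O(V),V)$ via naturality along the restriction map, and without restriction maps from $(\Sigma_n,\mathbb{R}^n)$ to a general $(G,V)$ a morphism $\beta\colon P_F(X)\to Y$ is not determined by its components $\beta(\Sigma_n,\mathbb{R}^n)$. So the lemma cannot be obtained by straight substitution. A repair would have to do something genuinely new at this point --- restrict the objects of $D_0^{\mathcal{F}}$ to those comparable to some $(\Sigma_n,\mathbb{R}^n)$, replace $P_F$ by an actual left Kan extension along $l_F$, or connect $(G,V)$ to the objects $(\Sigma_n,\mathbb{R}^n)$ through a zig-zag involving linear isometric embeddings (say, embedding $V$ into a sum of permutation representations of a symmetric group containing $G$) --- and none of this appears in your write-up.
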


Analogous to Proposition \ref{strlevel} and Theorem \ref{globalmodelpf}, we can construct the $\mathcal{F}-$level model structure
and the $\mathcal{F}-$global model structure on $Sp_W^{D_0^{\mathcal{F}}}$.
\begin{theorem} There is a model structure on $Sp_W^{D_0^{\mathcal{F}}}$ Quillen equivalent to the $\mathcal{F}-$level model structure on orthogonal spectra
in Proposition 4.3.5, \cite{SS}.

In addition, there is a model structure on $Sp_W^{D_0^{\mathcal{F}}}$ Quillen equivalent to the $\mathcal{F}-$global model structure on orthogonal spectra
in Theorem 4.3.17 \cite{SS}.
\end{theorem}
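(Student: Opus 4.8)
The plan is to transfer the model structures across the adjunction $(P_F \dashv Q_F)$ of Lemma \ref{adjpqf}, exactly as in the proofs of Proposition \ref{strlevel} and Theorem \ref{globalmodelpf}, using the transfer theorem (Theorem \ref{liftp}) with $\mathcal{C} = Sp^O$ equipped with the $\mathcal{F}$-level (resp. $\mathcal{F}$-global) model structure of \cite{SS} and $\mathcal{D} = Sp_W^{D_0^{\mathcal{F}}}$. First I would define a map $f$ in $Sp_W^{D_0^{\mathcal{F}}}$ to be a weak equivalence or fibration precisely when $Q_F(f)$ is one in the relevant model structure on $Sp^O$, and cofibrations as maps with the left lifting property against acyclic fibrations; the generating (acyclic) cofibrations of $Sp_W^{D_0^{\mathcal{F}}}$ are then $\{P_F i\}$ and $\{P_F j\}$ for $I$, $J$ the generating sets in $Sp^O$. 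One then needs to check hypotheses (1) and (2) of Theorem \ref{liftp}.

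For hypothesis (1): $Q_F$ is (the inclusion followed by) the forgetful functor $U_F$ between diagram spectrum categories induced by $l_F: L \to D_0^{\mathcal{F}}$, and such forgetful functors are computed objectwise, hence commute with all colimits, in particular sequential ones. This is the same observation used in Proposition \ref{strlevel} and Theorem \ref{globalmodelpf}. For hypothesis (2): I would argue as in Theorem \ref{globalmodelpf}, using that $Q_F \circ P_F = \mathrm{Id}$ (so $\eta^F$ is an isomorphism, giving $P_F$ full and faithful). The key point is that $Sp^O$ with the $\mathcal{F}$-level or $\mathcal{F}$-global model structure has a fibrant replacement functor $R$ (all objects are level-fibrant in the level structure; for the global structure $R$ is the functorial fibrant replacement of \cite{SS}), and $Y \mapsto P_F(R(Q_F Y))$ is a fibrant replacement functor on $Sp_W^{D_0^{\mathcal{F}}}$ — here one uses that $P_F$ preserves fibrant objects because $Q_F P_F = \mathrm{Id}$ and $Q_F$ detects fibrancy by definition. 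Together with functorial path objects $Y^I(G,V) := Y(G,V)^I$ for fibrant $Y$, the standard argument (Lemma 2.3 of Chapter II of \cite{GPJJ}, or the Quillen path-object argument) shows that any cofibration with the left lifting property against all fibrations is a weak equivalence. This yields the cofibrantly generated model structures on $Sp_W^{D_0^{\mathcal{F}}}$.

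Finally, for the Quillen equivalences: by construction $Q_F$ preserves and detects weak equivalences and fibrations, so $(P_F \dashv Q_F)$ is a Quillen adjunction in both cases. To see it is a Quillen equivalence, note the unit $\eta^F$ is an isomorphism (Lemma \ref{adjpqf}), so it suffices to check that for every fibrant $Y$ in $Sp_W^{D_0^{\mathcal{F}}}$ the counit $P_F Q_F Y \to Y$ is a weak equivalence, equivalently — applying the weak-equivalence-detecting functor $Q_F$ and using $Q_F P_F = \mathrm{Id}$ — that $Q_F$ of the counit is a weak equivalence, which is automatic. The subtle point hiding the real content is that $P_F$ lands in $Sp_W^{D_0^{\mathcal{F}}}$, i.e. that $P_F(Y)$ sends restriction maps $(G,V) \to (H,V)$ to $H$-weak equivalences: since $P_F(Y)(\phi_1,\phi_2) = Y(\phi_2)$ and a restriction map has $\phi_2$ an isomorphism, $P_F(Y)$ sends it to $Y$ applied to an isometric isomorphism, which is an isomorphism, hence certainly an $H$-weak equivalence — and this is exactly where the hypothesis that $\mathcal{F}$ consists of finite groups is used, via the comparison of $l_F: \mathbb{R}^n \mapsto (\Sigma_n, \mathbb{R}^n)$ with the $\mathcal{F}$-global theory of \cite{SS}, since for the family of finite groups the relevant equivariant homotopy information at every $G \in \mathcal{F}$ is already visible through the symmetric groups $\Sigma_n$.

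The main obstacle I expect is hypothesis (2) of the transfer theorem combined with the verification that $P_F$ genuinely preserves the relevant fibrant objects and that the resulting structure is the correct one — i.e. matching the $\mathcal{F}$-global fibrations with an explicit $\Omega$-spectrum-type condition analogous to the square (\ref{glfib}), and checking that the fibrant replacement $P_F(R(Q_F(-)))$ really produces $D_0^{\mathcal{F},W}$-spectra. The restriction to finite groups is what makes the forgetful functor $U_F$ through $\Sigma_n$ reflect enough: for a general global family of compact Lie groups the subgroup lattice of $\Sigma_n$ does not see all of $\mathcal{F}$, which is precisely why the theorem is stated only for the finite-group family.
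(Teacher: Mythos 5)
Your proposal takes essentially the same approach as the paper: the paper justifies this theorem only by declaring it ``analogous to'' Proposition \ref{strlevel} and Theorem \ref{globalmodelpf}, i.e., by transferring the $\mathcal{F}$-level and $\mathcal{F}$-global model structures along the adjunction $(P_F \dashv Q_F)$ of Lemma \ref{adjpqf} via the transfer theorem (Theorem \ref{liftp}), which is exactly what you spell out, including the fibrant replacement $P_F(R(Q_F(-)))$, the path objects, and the unit-isomorphism argument for the Quillen equivalence. One minor misattribution: the fact that $P_F(Y)$ sends restriction maps to isomorphisms holds for an arbitrary global family and does not use finiteness; finiteness enters only where you note in your final paragraph, namely in arranging that the objects $(\Sigma_n,\mathbb{R}^n)$ picked out by $l_F$ detect the $\mathcal{F}$-equivariant homotopy information.
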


More generally, for any global family $\mathcal{F}$, we have the question whether there is a model structure on the category $Sp_W^{D_0^{\mathcal{F}}}$
Quillen equivalent to the $\mathcal{F}-$global model structure on orthogonal spectra.

\bigskip

Moreover, as shown in Section 6.4, \cite{Huanthesis}, there is a Reedy model structure on   $Sp_W^{D_0^{\mathcal{F}}}$
when $\mathcal{F}$ is the global family of finite groups.

We can define a degree
function on $D_0^{\mathcal{F}}$ by
$$deg(G, V)=|G|\dim V,$$ the order of the
group $G$ times the dimension of the vector space $V$. In proposition 6.16 \cite{Huanthesis}, we show that $D_0^{\mathcal{F}}$
is a dualizable generalized Reedy category in the sense of Definition 1.1 \cite{BM}, with the linear isometric embeddings raising the degree and
the restriction maps lowering the degree. Thus, there is a Reedy model structure on $Sp^{D_0^{\mathcal{F}}}$, moreover, on $Sp_W^{D_0^{\mathcal{F}}}$.

We describe roughly the Reedy model structure on  $Sp_W^{D_0^{\mathcal{F}}}$ below.

\begin{definition}
A map $f: X\longrightarrow Y$ in $Sp_W^{D_0^{\mathcal{F}}}$ is
 a

-Reedy cofibration if for each object $r$ in $D_0^{\mathcal{F}}$, the relative latching map\\
$X_r\coprod_{L_r(X)}L_r(Y)\longrightarrow Y_r$ is a
$Aut(r)-$cofibration.

\smallskip
-Reedy weak equivalence if for each $r$, $$f(r)^H:
X(r)^H\longrightarrow Y(r)^H$$ is a weak equivalence for each
 subgroup $H$ of $Aut(r)$.

\smallskip
-Reedy fibration if for each $r$, the relative matching map
$$X_r^H\longrightarrow M_r(X)^H\times_{M_r(Y)^H}Y_r^H$$ is a Serre
fibration for each subgroup $H$ of $Aut(r)$.
  \end{definition}
\begin{theorem}The Reedy cofibrations, Reedy weak equivalences and Reedy
fibrations form a model structure, the Reedy model structure, on
$Sp_W^{D_0^{\mathcal{F}}}$. \label{Reedy}
\end{theorem}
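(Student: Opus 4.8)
The strategy is to run the generalized Reedy machinery of Berger--Moerdijk and then descend it to the $W$-local subcategory. The input is Proposition~6.16 of \cite{Huanthesis}, recalled above: with respect to the degree $\deg(G,V)=|G|\dim V$, the category $D_0^{\mathcal{F}}$ is a dualizable generalized Reedy category in the sense of \cite{BM}, its degree-raising wide subcategory being the linear isometric embeddings, its degree-lowering wide subcategory the restriction maps, and its Reedy factorization the unique-up-to-isomorphism decomposition of a morphism as a restriction map followed by a linear isometric embedding established earlier in Section~\ref{newdiagram}; the intersection of the two wide subcategories is the isomorphisms by Lemma~\ref{isoobd}, and $\mathrm{Aut}(G,V)$ is the normalizer $N_{O(V)}(G)$, a compact Lie group. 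This is exactly where the hypothesis that $\mathcal{F}$ consists of finite groups is used: only then is $|G|$ finite, so that $\deg$ takes values in $\mathbb{N}$ and the Reedy induction is well-founded.

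First I would produce the Reedy model structure on $Sp^{D_0^{\mathcal{F}}}$. By the Remark following Definition~\ref{spd0}, a $D_0^{\mathcal{F}}$-spectrum is the same datum as a based continuous functor out of the Thom-space category $\mathbf{O}_{\mathcal{F}}$ associated to $D_0^{\mathcal{F}}$, so $Sp^{D_0^{\mathcal{F}}}$ is the topologically enriched diagram category $\mathbf{O}_{\mathcal{F}}T$. I would check that $\mathbf{O}_{\mathcal{F}}$ inherits a dualizable generalized Reedy structure from $D_0^{\mathcal{F}}$: the same objects, the same degree $|G|\dim V$, degree-lowering part the restriction maps, degree-raising part the morphisms whose underlying $D_0^{\mathcal{F}}$-map is a linear isometric embedding---the fibre vectors of the Thom bundle $\xi$ only enlarge $\dim V$ and hence raise the degree---with the Reedy factorization obtained by carrying the fibre vector along the degree-raising half of the $D_0^{\mathcal{F}}$-factorization. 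Applying the generalized Reedy model structure theorem of \cite{BM} (in its topologically enriched form, to accommodate the possibly positive-dimensional automorphism groups) to $\mathbf{O}_{\mathcal{F}}$ with values in based compactly generated weak Hausdorff spaces, each automorphism group $\Gamma$ being equipped with the genuine fixed-point model structure on based $\Gamma$-spaces (cofibrantly generated, topological, proper), then yields a cofibrantly generated Reedy model structure on $Sp^{D_0^{\mathcal{F}}}$ whose weak equivalences, fibrations and cofibrations are detected, respectively, by $f(r)^H$ being a weak equivalence for every $H\le\mathrm{Aut}(r)$, by the relative matching maps being $H$-Serre fibrations, and by the relative latching maps being $\mathrm{Aut}(r)$-cofibrations---exactly the three classes named in the Definition immediately preceding the theorem, now read on all of $Sp^{D_0^{\mathcal{F}}}$.

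It remains to descend this to the full subcategory $Sp_W^{D_0^{\mathcal{F}}}$. This category is complete and cocomplete by the argument of Lemma~\ref{cococo}, so it has the limits and colimits that compute matching and latching objects, and it is closed under retracts; the issue is whether the two functorial factorizations can be performed inside it. I would carry out the Reedy induction degree by degree starting from a map of $D_0^{\mathcal{F},W}$-spectra: at each object $r$ of degree $n$ one factors, in based $\mathrm{Aut}(r)$-spaces, the canonical map from the latching object to the fibre product of the target with the matching object, and one must check that the spectrum so produced again sends every restriction map to an equivariant weak equivalence. Because a non-invertible restriction map strictly lowers the degree, the new restriction maps created at stage $n$ run from $r$ to objects already built at lower degrees, and their values factor through the matching object at $r$; establishing that these composites are equivariant weak equivalences is exactly the content of Section~6.4 of \cite{Huanthesis}, and I expect this step to be the main obstacle, because the free $D_0^{\mathcal{F}}$-spectra---on which the generating Reedy cofibrations of the ambient structure are built---are not $D_0^{\mathcal{F},W}$-spectra, so the compatibility with the $W$-locality condition of Definition~\ref{defd0wtf} is not formal. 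Once this is in hand, the model axioms hold for the three classes on $Sp_W^{D_0^{\mathcal{F}}}$, and Theorem~\ref{Reedy} follows.
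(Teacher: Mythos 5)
Your proposal follows essentially the same route as the paper: invoke Proposition 6.16 of \cite{Huanthesis} to give $D_0^{\mathcal{F}}$ (with degree $|G|\dim V$, restriction maps lowering and linear isometric embeddings raising degree) the structure of a dualizable generalized Reedy category in the sense of \cite{BM}, obtain the Reedy model structure on $Sp^{D_0^{\mathcal{F}}}$, and then pass to $Sp_W^{D_0^{\mathcal{F}}}$. The paper itself only sketches this and defers the substantive step --- that the Reedy factorizations can be carried out inside the $W$-local full subcategory --- to Section 6.4 of \cite{Huanthesis}, and you correctly identify that step as the non-formal part of the argument.
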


\section{Examples: Quasi-theories}\label{examples}

Quasi-elliptic cohomology is a variant of Tate K-theory. It is not an elliptic cohomology but does reflect the geometric features of the Tate curve. It has simpler form than
most elliptic cohomology theories and can be expressed explicitly by equivariant K-theories, which can be globalized. A thorough introduction of this theory can be found in
\cite{Huansurvey} and Chapter 2 \cite{Huanthesis}.

In \cite{HuanSpec} we construct a $\mathcal{I}_G-$FSP representing quasi-elliptic cohomology up to a weak equivalence, which, however, cannot arise from an orthogonal spectrum.
In \cite{Huanquasisp} we apply the idea to construct a $\mathcal{I}_G-$FSP weakly representing each quasi-theory $QE^*_{n, G}(-)$. In this section we show these $\mathcal{I}_G-$FSP together define a  $D^W_0-$spectrum.

\subsection{Quasi-theories}\label{quasith}
In this section we recall the quasi-theories. The main reference for that is \cite{Huanquasi}.

Let $G$ be a compact Lie group and $n$ denote a positive integer. Let $G^{tors}_{conj}$ denote a set of representatives of
$G-$conjugacy classes in the set $G^{tors}$ of torsion elements in $G$. Let $G^{n}_{z}$ denote set
$$\{\sigma=(\sigma_1, \sigma_2, \cdots \sigma_n )| \sigma_i\in G^{tors}_{conj}, [\sigma_i, \sigma_j]\mbox{   is  the   identity  element in  }G\}.$$

Let $\sigma=(\sigma_1, \sigma_2, \cdots \sigma_n)\in G^n_z$.  Define
\begin{align}C_G(\sigma)&:=\bigcap\limits_{i=1}^nC_G(\sigma_i); \label{Csigmadef}\\ \Lambda_G(\sigma)&:= C_G(\sigma)\times
\mathbb{R}^n/\langle (\sigma_1, -e_1), (\sigma_2, -e_2), \cdots (\sigma_n, -e_n)\rangle.\label{lambdadef}\end{align}
where $C_G(\sigma_i)$ is the centralizer of each $\sigma_i$ in $G$ and $\{e_1, e_2, \cdots e_n\}$ is a basis of $\mathbb{R}^n$.

Let $q:\mathbb{T}\longrightarrow U(1)$ denote the representation $t\mapsto e^{2\pi i t}$. Let $q_i=1\otimes\cdots\otimes q\otimes\cdots\otimes 1: \mathbb{T}^{n}\longrightarrow U(1)$ denote the tensor product with $q$ at the $i-$th position and trivial representations at other position. The representation ring $$R(\mathbb{T}^{n})\cong R(\mathbb{T})^{\otimes n}=\mathbb{Z}[q_1^{\pm}, \cdots q_n^{\pm}].$$

We have the exact sequence  \begin{equation}1\longrightarrow C_G(\sigma)\longrightarrow \Lambda_G(\sigma)\buildrel{\pi}\over\longrightarrow \mathbb{T}^{n}\longrightarrow 0 \end{equation}
where the first map is $g\mapsto [g, 0]$  and the second map is $\pi([g, t_1, \cdots t_n])=(e^{2\pi i t_1}, \cdots e^{2\pi it_n}).$
Then the map $\pi^*: R(\mathbb{T}^{n})\longrightarrow R\Lambda_G(\sigma)$ equips the representation ring $R\Lambda_G(\sigma)$ the structure as an
 $R(\mathbb{T}^{n})-$module.

This is Lemma 3.1 \cite{Huanquasi} presenting the relation between $RC_G(\sigma)$ and $R\Lambda_G(\sigma)$.

\begin{lemma} $\pi^*: R(\mathbb{T}^{n})\longrightarrow R\Lambda_G(\sigma)$ exhibits $R\Lambda_G(\sigma)$ as a free $R(\mathbb{T}^{n})-$module.

There is an $R(\mathbb{T}^{n})-$basis of $R\Lambda_G(\sigma)$
given by irreducible representations $\{V_{\lambda}\}$, such that
restriction $V_{\lambda}\mapsto V_{\lambda}|_{C_G(\sigma)}$ to $C_G(\sigma)$
defines a bijection between $\{V_{\lambda}\}$ and the set
$\{\lambda\}$ of irreducible representations of
$C_G(\sigma)$.\label{cl}\end{lemma}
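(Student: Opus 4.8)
The plan is to deduce the statement from the structure of $\Lambda_G(\sigma)$ as an extension of the torus $\mathbb{T}^n$ by the compact Lie group $C_G(\sigma)$, together with the classical description of representations of such an extension. First I would unwind the defining extension $1\to C_G(\sigma)\to\Lambda_G(\sigma)\xrightarrow{\pi}\mathbb{T}^n\to 0$: the key point is that this extension is \emph{central in the torus direction} in the precise sense that the elements $[\sigma_i,-e_i]$ lift the topological generators of the standard circles, and each $\sigma_i$ is central in $C_G(\sigma)$ (it lies in every $C_G(\sigma_j)$ by the commuting condition in the definition of $G^n_z$). Consequently, restriction along $C_G(\sigma)\hookrightarrow\Lambda_G(\sigma)$ carries a complete set of irreducibles of $\Lambda_G(\sigma)$ onto a complete set of irreducibles of $C_G(\sigma)$: given an irreducible $\lambda$ of $C_G(\sigma)$, by Schur's lemma each central $\sigma_i$ acts on $\lambda$ by a root of unity $e^{2\pi i r_i}$, and one checks that the assignment extending $\lambda$ by letting the chosen lift $[\sigma_i,-e_i]$ act via a compatible scalar produces an irreducible $\Lambda_G(\sigma)$-representation $V_\lambda$ with $V_\lambda|_{C_G(\sigma)}\cong\lambda$; distinct $\lambda$ give distinct $V_\lambda$ because the restriction already distinguishes them, and every irreducible of $\Lambda_G(\sigma)$ restricts to an isotypic multiple of some $\lambda$, hence (being irreducible) to $\lambda$ itself. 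This gives the claimed bijection $\{V_\lambda\}\leftrightarrow\{\lambda\}$.

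Next I would show these $V_\lambda$ form an $R(\mathbb{T}^n)$-basis of $R\Lambda_G(\sigma)$. The map $\pi^*$ makes $R\Lambda_G(\sigma)$ an $R(\mathbb{T}^n)=\mathbb{Z}[q_1^{\pm},\dots,q_n^{\pm}]$-module, where $q_i$ acts by tensoring with the one-dimensional representation of $\Lambda_G(\sigma)$ pulled back from the $i$-th coordinate of $\mathbb{T}^n$. An arbitrary finite-dimensional representation $W$ of $\Lambda_G(\sigma)$ decomposes, under the central circles generated by the $[\sigma_i,-e_i]$ together with their conjugates, into weight spaces; twisting by an appropriate monomial $q_1^{a_1}\cdots q_n^{a_n}$ shifts these weights, and after such a twist the representation is pulled back (as a $C_G(\sigma)$-module with the torus acting through a fixed character) from $C_G(\sigma)$, hence is an integral combination of the $V_\lambda$. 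This shows the $V_\lambda$ generate $R\Lambda_G(\sigma)$ over $R(\mathbb{T}^n)$. For linear independence, I would apply the restriction homomorphism $R\Lambda_G(\sigma)\to RC_G(\sigma)$: since $V_\lambda|_{C_G(\sigma)}=\lambda$ and the $\lambda$ are a $\mathbb{Z}$-basis of $RC_G(\sigma)$, any $R(\mathbb{T}^n)$-linear relation $\sum_\lambda f_\lambda(q)V_\lambda=0$ restricts to a relation among the $\lambda$ whose coefficients are the augmentations (set all $q_i=1$) of the $f_\lambda$; running the same argument after twisting by monomials extracts each Laurent coefficient of each $f_\lambda$ and forces $f_\lambda=0$. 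Hence $\{V_\lambda\}$ is a free basis.

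The main obstacle I expect is the bookkeeping in the twisting argument: one must be careful that the central elements whose weights are being tracked are exactly the images of $(\sigma_i,-e_i)$ under $\pi$, i.e. that $\pi([g,t_1,\dots,t_n])=(e^{2\pi i t_1},\dots,e^{2\pi i t_n})$ really does identify the relevant circles, and that conjugation in $\Lambda_G(\sigma)$ by elements of $C_G(\sigma)$ does not mix the $q_i$-degrees (it does not, since the circle direction is central modulo $C_G(\sigma)$). A secondary subtlety is handling the case where some $\sigma_i$ has finite order $m_i>1$: then $[\sigma_i,-e_i]^{m_i}=[\sigma_i^{m_i},-m_i e_i]=[e,-m_ie_i]$ sits in the torus, so the scalar by which $[\sigma_i,-e_i]$ acts on an extension of $\lambda$ is constrained to an $m_i$-th root of $e^{2\pi i r_i m_i}$ — there is still a consistent choice, and the freedom in that choice is precisely absorbed by the $R(\mathbb{T}^n)$-action, which is what ultimately makes the module free rather than merely finitely generated. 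Once these points are pinned down, the rest is the routine representation theory of compact groups sketched above.
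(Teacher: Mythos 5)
The paper does not actually prove this statement; it imports it verbatim as Lemma 3.1 of \cite{Huanquasi}, and the only trace of an argument here is the explicit construction $(V_{\lambda})_{\sigma}=V_{\lambda}\odot_{\mathbb{C}}(q^{m_{\lambda 1}/l_1}\otimes\cdots\otimes q^{m_{\lambda n}/l_n})$ in Section \ref{explicitconstr}, which is exactly your extension-by-a-compatible-scalar. So your overall route is the intended one: the $\sigma_i$ are central in $C_G(\sigma)$, Schur's lemma gives the scalars $e^{2\pi i m_{\lambda i}/l_i}$, the formula $[g,t]\mapsto\lambda(g)e^{2\pi i\sum_j m_{\lambda j}t_j/l_j}$ is well defined precisely because $(\sigma_i,-e_i)$ acts trivially, and conjugation by $[g,t]$ on $C_G(\sigma)$ is inner (the $\mathbb{R}^n$ factor is central), so Clifford theory gives $W|_{C_G(\sigma)}\cong\lambda$ with multiplicity one and $W\cong V_\lambda\otimes\pi^*(q^a)$ for every irreducible $W$ of $\Lambda_G(\sigma)$. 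Your identification of where the freedom in the choice of extension goes (into the $R(\mathbb{T}^n)$-action) is exactly the point that makes the module free.

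One step does not work as written: in the linear-independence argument you claim that ``running the same argument after twisting by monomials extracts each Laurent coefficient of each $f_\lambda$.'' It does not: multiplying the relation $\sum_\lambda f_\lambda V_\lambda=0$ by $q^{-b}$ and then restricting to $C_G(\sigma)$ kills every $q^a$ equally, so you recover only the augmentation $f_\lambda(1,\dots,1)=0$ each time, never the individual coefficients $c_{\lambda,b}$. The repair is already contained in your first paragraph: since every irreducible of $\Lambda_G(\sigma)$ is of the form $q^aV_\lambda$, and $q^aV_\lambda\cong q^bV_\mu$ forces $\lambda=\mu$ (restrict to $C_G(\sigma)$) and then $a=b$ (compare characters on the central torus $\{[e,t]\}$, or note that $q^{a-b}\otimes V_\lambda\cong V_\lambda$ forces the character $q^{a-b}$ of $\mathbb{T}^n$ to be trivial), the elements $\{q^aV_\lambda\}_{a,\lambda}$ are pairwise distinct irreducibles and hence a $\mathbb{Z}$-basis of $R\Lambda_G(\sigma)$; freeness over $R(\mathbb{T}^n)=\mathbb{Z}[q_1^{\pm},\dots,q_n^{\pm}]$ with basis $\{V_\lambda\}$ is then immediate. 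With that substitution the proof is complete.
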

\begin{definition}For equivariant cohomology theories $\{E_{H}^*\}_H$ and any $G-$space $X$, the corresponding quasi-theory $QE_{n, G}^*(X)$ is defined to be
$$\prod_{\sigma\in
G^{n}_{z}}E^*_{\Lambda_G(\sigma)}(X^{\sigma}).$$\label{qedef}\end{definition}

\begin{example}[Motivating example: Tate K-theory  and quasi-elliptic cohomology]
Tate $K-$theory is the generalized elliptic cohomology associated to the Tate curve. The elliptic cohomology theories form a sheaf of cohomology theories over the moduli stack of elliptic curves
$\mathcal{M}_{ell}$. Tate K-theory over Spec$\mathbb{Z}((q))$ is obtained when we restrict it to a punctured completed neighborhood of the cusp at $\infty$, i.e. the Tate
curve $Tate(q)$ over Spec$\mathbb{Z}((q))$  [Section 2.6,
\cite{AHS}].  The divisible group associated to Tate K-theory is $\mathbb{G}_m\oplus \mathbb{Q}/\mathbb{Z}$.

Other than the theory over Spec$\mathbb{Z}((q))$, we can define variants of Tate K-theory over Spec$\mathbb{Z}[q]$ and Spec$\mathbb{Z}[q^{\pm}]$ respectively. The theory over Spec$\mathbb{Z}[q^{\pm}]$
is of especial interest.
Inverting $q$ allows us to define a sufficiently non-naive equivariant cohomology theory and to interpret some constructions more easily in terms of extensions of groups over
the circle. The resulting cohomology theory is called quasi-elliptic cohomology \cite{Rez11}\cite{Huanthesis}\cite{Huansurvey}.
Its relation with Tate K-theory is \begin{equation}QEll^*_G(X)\otimes_{\mathbb{Z}[q^{\pm}]}\mathbb{Z}((q))=(K^*_{Tate})_G(X)
\label{tateqellequiv}\end{equation} which also reflects the geometric nature of the Tate curve.
The idea of quasi-elliptic cohomology is  motivated by Ganter's
construction of Tate K-theory  \cite{Dev96}.
It is not an elliptic cohomology  but a more robust and algebraically simpler treatment of
Tate K-theory. This new theory can be interpreted in a neat form
by equivariant K-theories. Some
formulations in it can be generalized to equivariant cohomology
theories other than Tate K-theory.

Quasi-elliptic cohomology $QEll^*_G(-)$ is exactly the quasi-theory $QK_{1, G}^*(-)$ in Definition \ref{qedef}.
\end{example}

\begin{example}[Generalized Tate K-theory and generalized quasi-elliptic cohomology]

In Section 2 \cite{Gan07} Ganter  gave an interpretation of $G-$equivariant Tate K-theory for finite groups $G$
by the loop space of a global quotient orbifold. Apply the loop construction $n$ times, we can get the $n-$th generalized Tate K-theory. The divisible group associated to it is
$\mathbb{G}_m\oplus (\mathbb{Q}/\mathbb{Z})^n$.

With quasi-theories, we can get a neat expression of it. Consider the quasi-theory
$$QK_{n, G}^*(X)=\prod_{\sigma\in
G^{n}_{z}}K^*_{\Lambda_G(\sigma)}(X^{\sigma}).$$
$QK_{n, G}^*(X)\otimes_{\mathbb{Z}[q^{\pm}]^{\otimes n}}\mathbb{Z}((q))^{\otimes n}$ is isomorphic to the $n-$th generalized Tate K-theory.

\label{generalizedquasi}
\end{example}

\subsection{Construction of the $D^{W}_0-$spectrum}\label{explicitconstr}
In this section we show the construction of a $D^{\mathbb{C}}-$spectrum representing the theory $QE^*_{n, G}(-)$. The details of the construction are in Section 5 \cite{Huanquasisp} with the main conclusion Theorem 5.18. Another reference is Chapter 4 \cite{Huanthesis}.

Let $G$ be a compact Lie group. We consider equivariant cohomology
theories $E$ that  have the same key features as equivariant
complex K-theories. More explicitly, \\ $\bullet$ The theories
$\{E_G^*\}_G$ have the change-of-group isomorphism, i.e. for any
closed subgroup $H$ of $G$ and $H-$space $X$, the change-of-group
map $\rho^G_H: E^*_G(G\times_HX)\longrightarrow E^*_H(X)$ defined
by $E^*_G(G\times_HX)\buildrel{\phi^*}\over\longrightarrow
E^*_H(G\times_H X)\buildrel{i^*}\over\longrightarrow E_H^*(X)$ is
an isomorphism where $\phi^*$ is the restriction map and $i:
X\longrightarrow
G\times_HX$ is the $H-$equivariant map defined by $i(x)=[e, x].$.\\
$\bullet$ There exists an orthogonal spectrum $E$ such that for
any compact Lie group $G$ and "large" real $G-$representation $V$
and a compact $G-$space $B$ we have a bijection
$E_G(B)\longrightarrow [B_+, E(V)]^G$. And $(E_G, \eta^{E}, \mu^{E})$ is the underlying orthogonal $G-$spectrum of $E$.\\
$\bullet$ Let $G$ be a compact Lie group and $V$ an orthogonal
$G-$representation. For every ample $G-$representation $W$, the
adjoint structure map $\widetilde{\sigma}^E_{V, W}:
E(V)\longrightarrow \mbox{Map}(S^W, E(V\oplus W))$ is a $G-$weak
equivalence.

In Theorem 5.18 \cite{Huanquasisp} we introduce the conclusion below.
\begin{theorem}
For each positive integer $n$ and each compact Lie group $G$, there is a well-defined functor $\mathcal{Q}_{G, n}$ from the category of orthogonal ring spectra to the category of
$\mathcal{I}_G-$FSP sending $E$  to
$(QE_n(G, -), \eta^{QE_n}, \mu^{QE_n})$ that weakly represents the
quasi-theory $QE_{n, G}^*$.
\label{igmain}
\end{theorem}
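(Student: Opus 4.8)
The statement (Theorem~\ref{igmain}, which here is essentially a recollection of Theorem 5.18 of \cite{Huanquasisp}) asks for a functor $\mathcal{Q}_{G,n}$ from orthogonal ring spectra to $\mathcal{I}_G$-FSPs producing a multiplicative weak representative for the quasi-theory $QE^*_{n,G}(-)$. The plan is to build the underlying object levelwise from the curve-theoretic description of $QE_{n,G}$ in Definition~\ref{qedef}, then equip it with unit and multiplication, and finally check the FSP axioms and functoriality. First I would fix the indexing: for an $\mathcal{I}_G$-FSP we must assign to each real $G$-representation $V$ (or each object of $\mathcal{I}_G$) a based $G$-space. Mimicking the formula $QE^*_{n,G}(X)=\prod_{\sigma\in G^n_z}E^*_{\Lambda_G(\sigma)}(X^\sigma)$, I would set
\[
QE_n(G,V):=\prod_{\sigma\in G^n_z} \mathrm{Map}_{C_G(\sigma)}\!\bigl(\ast, E(V^\sigma)\bigr)\text{-type space},
\]
more precisely the appropriate $C_G(\sigma)$-fixed-point/Borel construction built from the orthogonal spectrum $E$ evaluated at the $\sigma$-fixed subrepresentation $V^\sigma$, twisted by the $\mathbb{T}^n$-module structure coming from $\Lambda_G(\sigma)$ recorded in Lemma~\ref{cl}. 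The residual $G$-action permutes the factors indexed by $G^n_z$ via conjugation and acts on each factor through $C_G(\sigma)$, exactly as in the construction for quasi-elliptic cohomology in \cite{HuanSpec}; the point of Lemma~\ref{cl} is that the $R(\mathbb{T}^n)$-module $R\Lambda_G(\sigma)$ is free with a basis of irreducibles restricting bijectively to those of $C_G(\sigma)$, which is what makes the spectrum-level twist well-behaved and compatible with multiplication.

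Next I would construct the FSP structure. The unit $\eta^{QE_n}\colon S^V\to QE_n(G,V)$ is assembled from the unit $\eta^E$ of the orthogonal ring spectrum $E$ in each factor, using that the identity element lies in $G^n_z$ and sits compatibly. The multiplication $\mu^{QE_n}\colon QE_n(G,V)\wedge QE_n(G,W)\to QE_n(G,V\oplus W)$ is built factorwise from $\mu^E$: on the $\sigma$-component one uses that $(V\oplus W)^\sigma=V^\sigma\oplus W^\sigma$ and that $\Lambda_G(\sigma)$-structures are additive in the representation variable, so $\mu^E\colon E(V^\sigma)\wedge E(W^\sigma)\to E(V^\sigma\oplus W^\sigma)$ descends to the twisted factors. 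Here one must check associativity, unitality and the equivariance/$O(V)\times O(W)$-compatibility of $\mu^{QE_n}$; these follow from the corresponding properties of $(E,\eta^E,\mu^E)$ together with the bookkeeping of conjugation actions on the index set $G^n_z$, and are the kind of diagram chase that reduces to the single-factor (known) case of quasi-elliptic cohomology. Functoriality in $E$ is then immediate: a map of orthogonal ring spectra $E\to E'$ induces a map on each $E(V^\sigma)$ commuting with $\eta$ and $\mu$, hence a map of $\mathcal{I}_G$-FSPs $\mathcal{Q}_{G,n}(E)\to\mathcal{Q}_{G,n}(E')$, and identities and composites are preserved on the nose.

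Finally I would verify that $\mathcal{Q}_{G,n}(E)$ weakly represents $QE^*_{n,G}$. Using the three listed hypotheses on $E$ (change-of-group isomorphism, representability $E_G(B)\cong[B_+,E(V)]^G$ for ample $V$, and that the adjoint structure maps $\widetilde{\sigma}^E_{V,W}$ are $G$-weak equivalences), one gets for a compact $G$-space $X$ and ample $V$ a chain of bijections
\[
[X_+, QE_n(G,V)]^G \;\cong\; \prod_{\sigma\in G^n_z}[(X^\sigma)_+, E(V^\sigma)]^{C_G(\sigma)} \;\cong\; \prod_{\sigma\in G^n_z} E^*_{C_G(\sigma)}(X^\sigma) \;\cong\; \prod_{\sigma\in G^n_z} E^*_{\Lambda_G(\sigma)}(X^\sigma) \;=\; QE^*_{n,G}(X),
\]
where the first step splits the mapping space over the product and identifies the $\sigma$-factor via the fixed-point/Borel description, the second uses representability of $E$ applied to $C_G(\sigma)$ and $X^\sigma$, and the third is precisely the free-module identification of Lemma~\ref{cl} (this is where the $R(\mathbb{T}^n)$-module structure turns $E^*_{C_G(\sigma)}$ into $E^*_{\Lambda_G(\sigma)}$). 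Checking the stability of $\mathcal{Q}_{G,n}(E)$ — that its own adjoint structure maps are $G$-weak equivalences, so that the $[X_+,-]^G$-description is honest — reduces factorwise to the corresponding property of $E$, since $V\mapsto V^\sigma$ sends ample $G$-representations to ample $C_G(\sigma)$-representations. \textbf{The main obstacle} I expect is not the representability bijection but the verification that $\mu^{QE_n}$ is genuinely associative and $O(V)\times O(W)$-equivariant at the space level across all of $G^n_z$ simultaneously: the conjugation action of $G$ shuffles the index set, and one must make the twisting data (the $\Lambda_G(\sigma)$-module structures, realized spectrum-theoretically) strictly compatible with these permutations rather than merely up to homotopy — this is exactly the delicate point already handled for $n=1$ in \cite{HuanSpec}, and the work is to carry that coherence uniformly through the $n$-fold and general-compact-Lie-group setting, which is the content of \cite{Huanquasisp}.
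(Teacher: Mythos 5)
There is a genuine gap, and it sits exactly at the step you treat as routine. Your representing space for the $\sigma$-factor is (a fixed-point/Borel variant of) $E(V^\sigma)$ with its $C_G(\sigma)$-action, and your chain of bijections passes from $\prod_\sigma E^*_{C_G(\sigma)}(X^\sigma)$ to $\prod_\sigma E^*_{\Lambda_G(\sigma)}(X^\sigma)$ by invoking Lemma \ref{cl}. But Lemma \ref{cl} does not assert that these two groups are isomorphic: it says $R\Lambda_G(\sigma)$ is a \emph{free $R(\mathbb{T}^n)$-module} on a basis corresponding to the irreducibles of $C_G(\sigma)$, i.e.\ roughly $R\Lambda_G(\sigma)\cong RC_G(\sigma)\otimes_{\mathbb{Z}}\mathbb{Z}[q_1^{\pm},\dots,q_n^{\pm}]$. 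Already for $X=\mathrm{pt}$ and $E=K$ the $\Lambda_G(\sigma)$-equivariant theory is strictly larger than the $C_G(\sigma)$-equivariant one, so a space representing the latter cannot represent the former. The substance of the construction in Section \ref{explicitconstr} (and of Theorem 5.18 of \cite{Huanquasisp}) is precisely the device that manufactures a $\Lambda_G(\sigma)$-equivariant representing object out of data indexed on $G$-representations $V$: one forms the faithful real $\Lambda_G(\sigma)$-representation $(V)^{\mathbb{R}}_\sigma$ of (\ref{realfaithv}) by twisting the isotypic pieces of $V\otimes_{\mathbb{R}}\mathbb{C}$ by the characters $q^{\frac{m_{\lambda i}}{l_i}}$, and then takes the shifted mapping space $F_\sigma(G,V)=\mathrm{Map}_{\mathbb{R}}(S^{(V)^{\mathbb{R}}_\sigma}, E((V)^{\mathbb{R}}_\sigma\oplus V^\sigma))$; the hypothesis that the adjoint structure maps $\widetilde{\sigma}^E_{V,W}$ are equivariant weak equivalences for ample $W$ is what identifies this with a representing object for $E_{\Lambda_G(\sigma)}$. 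This twist is the mechanism that produces $E^*_{\Lambda_G(\sigma)}$ rather than $E^*_{C_G(\sigma)}$, and it has no counterpart in your proposal.

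Two further ingredients you omit are load-bearing rather than bookkeeping. First, the unit of an $\mathcal{I}_G$-FSP must be a $G$-map $S^V\to QE_n(G,V)$, but $F_\sigma(G,V)$ only receives a natural map from $S^{V^\sigma}$; the paper extends over the non-fixed directions of $V$ by passing to (a quotient of a subspace of) the join $F_\sigma(G,V)\ast S(G,V)_\sigma$ with $S(G,V)_\sigma=Sym(V)\setminus Sym(V)^\sigma$, and the unit is the explicit interpolation formula (\ref{finaleta}). Without some such device the unit, and hence the structure maps of the underlying orthogonal $G$-spectrum, do not exist. Second, each $QE_{n,\sigma}(G,V)$ carries only a $C_G(\sigma)$-action; the $G$-action is obtained by coinduction $\mathrm{Map}_{C_G(\sigma)}(G,-)$ as in (\ref{EGVnog}), and it is this coinduction, together with the change-of-group hypothesis on $E$, that feeds into the representability comparison. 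Your high-level outline --- product over $G^n_z$, factorwise unit and multiplication built from $\eta^E$ and $\mu^E$, functoriality in $E$ --- does match the shape of the paper's construction, but as written the object you build represents the wrong theory and lacks a unit map.
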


We sketch the construction of the $\mathcal{I}_G-$FSP
$(QE_n(G, -), \eta^{QE_n}, \mu^{QE_n})$ below. For more details please see \cite{Huanquasisp}.

\begin{itemize}

\item Let $V$ be a real $G-$representation.

\item Let $Sym(V):=\bigoplus\limits_{n\geq 0} Sym^n(V)$ denote the total
symmetric power. It inherits a $G-$action from that on $V$.

\item Let $S(G, V)_\sigma: = Sym(V)\setminus Sym(V)^\sigma=\bigcup\limits_{i=1}^n Sym(V)\setminus Sym(V)^{\sigma_i}$ for each $\sigma=(\sigma_1, \cdots \sigma_n)\in G^n_z$.

\item Let $E$ denote the unitary global spectrum representing the theory $E^*$.
\item $(V)^{\mathbb{R}}_\sigma$ is a specific real $\Lambda_G(\sigma)-$representation introduced in (A.2) in Appendix \cite{Huanquasisp}. We show the construction below.
\end{itemize}
 Let $\rho$ be a complex $G-$representation with
underlying space $V$. Let $i: C_G(\sigma)\hookrightarrow G$ denote
the inclusion. 
Let $\{\lambda\}$ denote all the irreducible complex
representations of $C_G(\sigma)$. We have
the decomposition of $C_G(\sigma)-$representation $i^*V$ into its isotypic components
$i^*V\cong\bigoplus\limits_{\lambda}V_{\lambda}$ where $V_{\lambda}$
denotes the sum of all subspaces of $V$ isomorphic to $\lambda$.
Each $V_{\lambda}=Hom_{C_G(g)}(\lambda,
V)\otimes_{\mathbb{C}}\lambda$ is unique as a subspace.

Each $V_{\lambda}$ can be equipped with a
$\Lambda_G(\sigma)-$action. Each $\lambda(\sigma_i)$ is of the form
$e^{\frac{2\pi i m_{\lambda i} }{l_i}}I$ with $l_i$ the order of $\sigma_i$, $0< m_{\lambda i}\leq l_i$
and $I$ the identity matrix. As shown in Lemma \ref{cl},
we have the well-defined complex
$\Lambda_G(\sigma)-$representations
\[(V_{\lambda})_{\sigma}:=V_{\lambda}\odot_{\mathbb{C}}
(q^{\frac{m_{\lambda 1}}{l_1}}\otimes\cdots\otimes q^{\frac{m_{\lambda n}}{l_n}})\] and
\begin{equation}(V)_{\sigma}:=\bigoplus_{\lambda}(V_{\lambda})_{\sigma}.\label{vgrepresentation}\end{equation}
In addition,
\begin{equation}(V)^{\mathbb{R}}_{\sigma} :=
(V\otimes_{\mathbb{R}}\mathbb{C})_{\sigma}\oplus
(V\otimes_{\mathbb{R}}\mathbb{C})^{*}_{\sigma}\label{realfaithv}\end{equation} is real
$\Lambda_G(\sigma)-$representation. It is faithful when $V$ is a faithful real $G-$representation.
\bigskip

From the ingredients above, we construct \begin{equation}F_{\sigma}(G, V):= \mbox{Map}_{\mathbb{R}}(S^{(V)^{\mathbb{R}}_\sigma}, E((V)^{\mathbb{R}}_\sigma\oplus
V^\sigma))\end{equation}
We have the unit
map $\eta_{\sigma}(G, V): S^{V^\sigma}\longrightarrow F_\sigma(G, V)$ and the
multiplication
$$\mu^F_{(\sigma, \tau)}((G, V), (H, W)): F_{\sigma}(G, V)\wedge F_{\tau}(H,
W)\longrightarrow F_{(\sigma, \tau)}(G\times H, V\oplus W)$$ cited in Proposition 5.2(i) \cite{Huanquasisp}.
\begin{equation}QE_{n, \sigma}(G, V):= \{t_1a+t_2b\in F_\sigma(G, V)\ast S(G, V)_\sigma|
\|b\|\leqslant t_2\}/\{t_1c_0+t_2 b\} \end{equation}where $\ast$ denotes the join.

\begin{equation}QE_n(G, V):=\prod\limits_{\sigma\in
G^{n}_{z}}\mbox{Map}_{C_G(\sigma)}(G, QE_{n, \sigma}(G, V))
\label{EGVnog}\end{equation}

 We
use $x_\sigma$ to denote the basepoint of $QE_{n, \sigma}(G, V)$.  For
each $v\in S^V$, there are $v_1\in S^{V^\sigma}$ and $v_2\in
S^{(V^\sigma)^{\perp}}$ such that $v=v_1\wedge v_2$. Let
$\eta^{QE_n}_{\sigma}(G, V): S^V\longrightarrow QE_{n, \sigma}(G, V)$ be the map
\begin{equation}\eta^{QE_n}_\sigma(G,
V)(v):=\begin{cases}(1-\|v_2\|)\eta_\sigma(G, V)(v_1)+ \|v_2\|v_2,
&\text{if $\|v_2\|\leqslant 1$;}\\ x_\sigma, &\text{if
$\|v_2\|\geqslant 1$.}
\end{cases}\label{finaleta}\end{equation} The unit map is defined by \begin{equation}\eta^{QE_n}(G, V): S^V\longrightarrow
\prod\limits_{\sigma\in G^{n}_{z}}\mbox{Map}_{C_G(\sigma)}(G, QE_{n, \sigma}(G,
V))\mbox{,   } v \mapsto \prod\limits_{\sigma\in
G^{n}_{z}}(\alpha\mapsto \eta^{QE_n}_{\sigma}(G, V)(\alpha\cdot
v)),\label{GetaQEll}
\end{equation}

Next, we construct the multiplication
map $\mu^{QE_n}$. First we define a map $$\mu^{QE_n}_{(\sigma, \tau)}((G,V),
(H, W)): QE_{n, \sigma}(G, V)\wedge QE_{n, \tau}(H, W)\longrightarrow QE_{n, (\sigma,
\tau)}(G\times H, V\oplus W)$$ by sending a point $[t_1a_1+
t_2b_1]\wedge [u_1a_2 + u_2b_2]$ to
\begin{equation}
\begin{cases}[(1-\sqrt{t^2_2+u^2_2})\mu^F_{(\sigma, \tau)}((G,V)
, (H, W))(a_1\wedge a_2) &\text{if $t^2_2+u^2_2\leq 1$ and
$t_2u_2\neq 0$;}\\ + \sqrt{t^2_2+u^2_2}(b_1+b_2)],
& \\
[(1-t_2)\mu^F_{(\sigma, \tau)}((G,V), (H, W))(a_1\wedge a_2)+ t_2b_1],
&\text{if $u_2=0$ and $0<t_2<1$}; \\ [(1-u_2)\mu^F_{(\sigma, \tau)}((G,V), (H, W))(a_1\wedge a_2) + u_2b_2],
&\text{if $t_2=0$ and $0<u_2<1$; } \\ [1\mu^F_{(\sigma, \tau)}((G,V), (H, W))(a_1\wedge a_2) + 0],
&\text{if $u_2=0$ and $t_2=0$; }\\ x_{\sigma, \tau},
&\text{Otherwise.}\end{cases}\label{muEg}
\end{equation} where $x_{\sigma, \tau}$ is the basepoint of
$QE_{n, (\sigma, \tau)}(G\times H, V\oplus W)$.

The basepoint of $QE_n(G, V)$ is the product of the basepoint of
each factor $\mbox{Map}_{C_G(\sigma)}(G, QE_{n, \sigma}(G, V))$, i.e. the product
of the constant map to the basepoint of  each $QE_{n, \sigma}(G, V)$.

We can define the multiplication $\mu^{QE_n}((G, V), (H, W)): QE_n(G,
V)\wedge QE_n(H, W)\longrightarrow QE_n(G\times H, V\oplus W)$ by
$$\big(\!\!\prod\limits_{\sigma\in G^{n}_{z}}
\!\!\alpha_\sigma\big)\wedge\big(\!\!\prod\limits_{ \tau\in
H^{n}_{z}}\!\!\beta_{\tau}\big)\mapsto\prod\limits_{\substack{\sigma\in
G^{n}_{z}\\ \tau\in H^{n}_{z}}}\!\!\bigg(\!\!(\sigma',
\tau')\mapsto \mu^{QE_n}_{(\sigma, \tau)}((G, V), (H,
W))\big(\alpha_\sigma(\sigma')\wedge\beta_\tau(\tau')\big)\!\!\bigg).
$$

However, the equivariant orthogonal spectra cannot arise from a global spectrum in the sense of Remark 4.1.2, \cite{SS}.

\subsection{$QE_n$ is a global spectrum in $Sp^W_{D_0}$}\label{QEnsp}

Via a completely parallel proof to that of Theorem 7.1.5, \cite{Huanthesis}, we have the conclusion below.
\begin{theorem}The $\mathcal{I}_G-$FSP $QE_n:\mathcal{I}_G\longrightarrow G\mathcal{T}$ together with the
unit map $\eta^{QE_n}$ defined in (\ref{GetaQEll}) and  the
multiplication $\mu^{QE_n}((G, -), (G, -))$
 that weakly represents the quasi-theory $QE^*_{n, G}(-)$ with $G$ varying all the finite groups can fit together to give
a $D^W_0-$spectrum. \end{theorem}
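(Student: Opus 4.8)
The plan is to assemble the collection $\{QE_n(G,-),\ \eta^{QE_n},\ \mu^{QE_n}\}$, with $G$ ranging over the finite subgroups of the orthogonal groups, into an object of $Sp_W^{D_0^{\mathcal{F}}}$ (Definition \ref{spd0wf}), $\mathcal{F}$ the global family of finite groups; this is what the statement calls a $D_0^W$-spectrum. On objects I would set $X(G,V):=QE_n(G,V)$, a based $G$-space by construction. Since every morphism of $D_0^{\mathcal{F}}$ factors, up to isomorphism, as a restriction map followed by a linear isometric embedding (Section \ref{newdiagram}), it is enough to define $X$ on these two classes of morphisms, to specify the structure maps, and then to verify the compatibility conditions of Definitions \ref{spd0} and \ref{spd0w}. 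This is the architecture of the proof of Theorem 7.1.5 in \cite{Huanthesis} for quasi-elliptic cohomology, and I expect it to transcribe essentially word for word, since the only features of quasi-elliptic cohomology used there are the three bulleted properties of $E$ recorded in Section \ref{explicitconstr}, each of which $QE_n$ inherits.

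For the structure maps I would take $\sigma_{(G,V),(H,W)}\colon S^W\wedge QE_n(G,V)\to QE_n(G\times H, V\oplus W)$ to be $\mu^{QE_n}\big((G,V),(H,W)\big)$ composed with $\mathrm{id}_{QE_n(G,V)}\wedge\,\eta^{QE_n}(H,W)$, after the swap $S^W\wedge QE_n(G,V)\cong QE_n(G,V)\wedge S^W$; this is $(G\times H)$-equivariant because $\eta^{QE_n}(H,W)\colon S^W\to QE_n(H,W)$ is $H$-equivariant and $\mu^{QE_n}$ is $(G\times H)$-equivariant. For a linear isometric embedding $(G,V)\to(H,W)$ I would combine the $\mathcal{I}_G$-functoriality of $QE_n$ in the representation variable with $\mu^{QE_n}$ and $\eta^{QE_n}$ applied along the orthogonal complement, exactly as a diagram spectrum is built from a functor with smash products. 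The identities $\sigma_{(G,V),(1,0)}=\mathrm{id}$ and the iterated-structure-map relation of Definition \ref{spd0} then follow from the unit and associativity axioms of the FSP $(QE_n,\eta^{QE_n},\mu^{QE_n})$ (Theorem \ref{igmain} and Proposition 5.2 of \cite{Huanquasisp}); functoriality of $X$ on linear isometric embeddings follows from the naturality of $\eta^{QE_n}$ and $\mu^{QE_n}$; and continuity over the topologized morphism spaces of $D_0^{\mathcal{F}}$ (Section 6.1, \cite{Huanthesis}) is immediate because all the maps involved are continuous.

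The substantive ingredient is the restriction maps. A restriction map $(G,V)\to(H,V)$ is the inclusion of a subgroup $H\le G\le O(V)$, and I would send it to the change-of-group comparison map $QE_n(G,V)\to QE_n(H,V)$ assembled in \cite{Huanquasisp} from the double-coset decomposition $H\backslash G/C_G(\sigma)$ and the change-of-group data of $E$; that this is $H$-equivariant, is compatible with composition of restriction maps, and intertwines the structure maps $\sigma$ is, once more, a transcription of \cite{Huanthesis}. It then remains to verify the defining condition of $D_0^{\mathcal{F},W}$ (Definition \ref{defd0wtf}), namely that each such map is an $H$-weak equivalence. For this I would argue as in the observation of the introduction: $QE_n(G,-)$ weakly represents $QE^*_{n,G}(-)$ by Theorem \ref{igmain}, and the family $\{QE^*_{n,G}(-)\}_G$ has the change-of-group isomorphism because $\{E^*_G(-)\}_G$ does (\cite{Huanquasi}); hence the comparison map induces on $\pi^H_*$ precisely the change-of-group isomorphism and is therefore an $H$-weak equivalence, so $X$ indeed lies in $Sp_W^{D_0^{\mathcal{F}}}$.

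The step I expect to be the main obstacle is \emph{coherence} of the change-of-group comparison maps: one needs a single natural choice of these maps making $X$ a functor on all of $D_0^{\mathcal{F}}$ — on composites of restriction maps and on restriction maps followed by linear isometric embeddings — and compatible with the structure maps $\sigma$, rather than merely a family of maps that happen to be equivalences object by object. Securing this coherence, together with the identification of the induced map on equivariant homotopy groups with the change-of-group isomorphism, is precisely what the parallel argument of Theorem 7.1.5 in \cite{Huanthesis} provides, so the proof reduces to checking that every step of that argument uses only the three axioms on $E$ from Section \ref{explicitconstr}.
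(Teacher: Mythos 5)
Your proposal is correct in substance and follows the same overall architecture as the paper: take $X(G,V)=QE_n(G,V)$, get the structure maps from the FSP data $(\eta^{QE_n},\mu^{QE_n})$, and deduce the condition of Definition \ref{defd0wt} from the change-of-group isomorphism of the quasi-theories combined with weak representability. The one genuine difference is organizational, and it matters for the step you single out as the main obstacle. You propose to define $X$ separately on restriction maps and on linear isometric embeddings and then invoke the factorization of a general morphism to glue, which forces you to prove coherence: independence of the (only up-to-isomorphism) factorization, functoriality on composites whose factorization is not the composite of the factorizations, and compatibility with $\sigma$. The paper's proof of Theorem \ref{qellrd0} avoids this entirely by writing down $QE_n(\phi)$ for an \emph{arbitrary} morphism $\phi=(\phi_1,\phi_2)$ in one uniform formula --- the join $F(\phi)_\tau\ast S(\phi)_\tau$ followed by the quotient and then induction along $C_H(\tau)\times_{C_{H'}(\tau)}H'$ with extension by the basepoint --- after which functoriality $QE_n(\psi\circ\phi)=QE_n(\psi)\circ QE_n(\phi)$ is a direct check. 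So the coherence problem you defer to the thesis does not arise in the paper's formulation; if you insist on the generators-and-relations route you would actually have to carry out that verification rather than cite it. On the other side of the ledger, your argument that each restriction map is sent to an $H$-weak equivalence (via the change-of-group isomorphism for $\{QE^*_{n,G}\}_G$ inherited from $E$, plus weak representability from Theorem \ref{igmain}) is spelled out more explicitly than in the paper, which asserts the $D_0^W$-space conclusion without isolating that step. Both routes rest on the same three axioms on $E$ from Section \ref{explicitconstr}, and both defer the stable (FSP) verification to the analogous argument in the thesis.
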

First we show the unstable conclusion.
\begin{theorem}$QE_n$ is a $D_0^W-$space. \label{qellrd0}\end{theorem}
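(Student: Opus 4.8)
The plan is to show that the functor $QE_n : D_0^{\mathcal{F}} \longrightarrow T$ (with $\mathcal{F}$ the global family of finite groups) indeed lands in $D_0^{\mathcal{F}, W}T$, i.e. that it sends every restriction map to an equivariant weak equivalence. A restriction map $r = (\phi_1, \mathrm{Id}) : (G, V) \longrightarrow (H, V)$ in $D_0^{\mathcal{F}}$ has $G = \phi_1(H \cap O(V))$ a closed subgroup of $H$ (both sitting inside $O(V)$), with $\phi_2 = \mathrm{Id}_V$; so what I must exhibit is that the induced map $QE_n(r) : QE_n(H, V) \longrightarrow QE_n(G, V)$ (the restriction of the $H$-action along $G \hookrightarrow H$, combined with the change-of-group comparison) is a $G$-weak equivalence. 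Since $V$ is the same vector space on both sides, the structure maps are not involved and this is a purely space-level statement about the formula (\ref{EGVnog}).

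First I would unwind the right-hand side. By (\ref{EGVnog}), $QE_n(H, V) = \prod_{\tau \in H^n_z} \mathrm{Map}_{C_H(\tau)}(H, QE_{n,\tau}(H, V))$. Restricting the $H$-action to $G$, I would use the standard decomposition of $\mathrm{Map}_{C_H(\tau)}(H, -)$ as a $G$-space: choosing representatives of the double cosets $C_H(\tau) \backslash H / G$, each such mapping space breaks up as a product indexed by those double cosets, and each factor is of the form $\mathrm{Map}_{C_G(\tau')}(G, QE_{n,\tau'}(G, V))$ for a conjugate $\tau'$ of $\tau$ that lies in $G^n_z$ — here the key input is exactly the change-of-group isomorphism hypothesis on $\{E^*_G\}$ recalled at the start of Section \ref{explicitconstr}, which guarantees $QE_{n,\tau}(H, V)$ restricts compatibly to $QE_{n,\tau'}(G, V)$ since these are built from $E$ applied to the $\Lambda$-representations $(V)^{\mathbb{R}}_\tau$, and $\Lambda_G(\tau') \subseteq \Lambda_H(\tau)$ with matching fixed-point data $V^\tau = V^{\tau'}$. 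Summing over $\tau \in H^n_z$ and reorganizing, the restricted space is identified with $\prod_{\sigma \in G^n_z} \mathrm{Map}_{C_G(\sigma)}(G, QE_{n,\sigma}(G, V)) = QE_n(G, V)$, and I would check that under this identification the comparison map $QE_n(r)$ is (homotopic to) the identity, hence a $G$-weak equivalence — in fact a $G$-homeomorphism. This is the content already established at the $\mathcal{I}_G$-FSP level in Theorem \ref{igmain} / Theorem 5.18 of \cite{Huanquasisp}, so much of this is a matter of citing that the change-of-group maps there are compatible isomorphisms and assembling them.

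Concretely, the steps in order: (i) reduce to restriction maps $r : (G, V) \to (H, V)$ with $G \leq H \leq O(V)$ and note $\phi_2 = \mathrm{Id}$, so only the formula (\ref{EGVnog}) matters; (ii) decompose each $\mathrm{Map}_{C_H(\tau)}(H, QE_{n,\tau}(H, V))$ as a $G$-space via double cosets $C_H(\tau) \backslash H / G$; (iii) for each double coset, use the change-of-group isomorphism for $E$ (and the matching of centralizers, fixed points, and the $\Lambda$-representations $(V)^{\mathbb{R}}_\sigma$) to identify the corresponding factor with $\mathrm{Map}_{C_G(\sigma)}(G, QE_{n,\sigma}(G, V))$ for the appropriate $\sigma \in G^n_z$; (iv) observe that as $\tau$ ranges over $H^n_z$ and the double cosets vary, the resulting $\sigma$ range bijectively over $G^n_z$, giving a $G$-homeomorphism $QE_n(r) : QE_n(H,V) \xrightarrow{\ \cong\ } QE_n(G,V)$, which is in particular a $G$-weak equivalence; (v) conclude that $QE_n$ factors through $D_0^{\mathcal{F},W}T$, i.e. is a $D_0^W$-space. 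I expect the main obstacle to be step (iii)–(iv): carefully matching the combinatorial bookkeeping — which $G$-conjugacy class $\sigma$ of commuting $n$-tuples a given $H$-class $\tau$ together with a double-coset representative produces, and checking that $\Lambda_G(\sigma)$ sits inside $\Lambda_H(\tau)$ compatibly so that the $E$-change-of-group maps glue into a single homeomorphism rather than merely a levelwise equivalence. Everything else is formal manipulation of mapping spaces and an appeal to the already-proven functoriality of $\mathcal{Q}_{G,n}$ in \cite{Huanquasisp}.
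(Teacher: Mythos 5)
There is a genuine gap, and it concerns the part of the statement your proposal treats as already known. Being a $D_0^W$-space means first of all being a \emph{functor} on $D_0$, and this is not supplied by Theorem \ref{igmain}: the $\mathcal{I}_G$-FSP structure of \cite{Huanquasisp} only gives the action of $QE_n$ on $G$-equivariant linear isometric embeddings for a \emph{fixed} group $G$. The morphisms of $D_0$ that change the group --- in particular every restriction map --- require new structure maps, and constructing them is the bulk of the paper's proof: one builds $F(\phi)_\tau$ from the structure maps of $E$, the embedding $S(\phi)_\tau$ of the singular loci, their join, the quotient $\phi_\star$, the induced map $\widetilde{\phi_\star}$ on the coinduced mapping spaces, and then checks that identities and compositions are respected. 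Your appeal to ``the already-proven functoriality of $\mathcal{Q}_{G,n}$'' does not cover any of this, so the object you want to prove is a $D_0^W$-space has not even been exhibited as a $D_0$-space.

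Your step (iv) is also incorrect as stated. Writing the restriction map as $(G,V)\to(H,V)$ with (per the definition of $D_0$) $\phi_1$ embedding $H$ as a closed subgroup of $G$ --- note your proposal inverts both this containment and the direction of the induced map, since $QE_n$ is covariant --- the Mackey double-coset decomposition of $\operatorname{res}^G_H\operatorname{Map}_{C_G(\sigma)}(G,-)$ does \emph{not} reassemble into $QE_n(H,V)$ up to homeomorphism. There are surplus factors indexed by those $\sigma\in G^n_z$ having no conjugate in $H$ (the paper's map simply sends the complement of $C_G(\tau)\times_{C_{H'}(\tau)}H'$ to the basepoint), and even on matching factors the comparison between $QE_{n,\phi_1(\tau)}(G,V)$ and $QE_{n,\tau}(H,V)$ goes through the structure maps of $E$ and the change of $\Lambda$-groups, which are only equivariant \emph{weak} equivalences under the standing hypotheses on $E$, not isomorphisms. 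Indeed, if $QE_n(r)$ were an equivariant homeomorphism for all restrictions, the construction would essentially come from an orthogonal spectrum, which the paper explicitly says it does not. So the weak-equivalence verification genuinely needs the homotopical input (the ampleness/weak-equivalence condition on $\widetilde{\sigma}^E_{V,W}$ and the change-of-group isomorphisms), and cannot be reduced to a formal homeomorphism of mapping spaces.
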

\begin{proof}
Let $(G, V)$ and $(H, W)$ be two objects and $\phi=(\phi_1, \phi_2): (G, V)\longrightarrow (H, W)$  be a
morphism in $D_0$. Let $H'$ denote $H\cap
O(\phi_2(V))$. Recall both $\phi_1$ and $\phi_2$ are
injective. Note that for $\tau=(\tau_1, \cdots \tau_n)\in H^n_z$, $\phi_1(\tau)=(\phi_1(\tau_1), \cdots \phi_1(\tau_n))$ can be in $G^n_{z}$. $\phi_2$ gives the
linear isometric embedding
\begin{equation}\phi_{2  \ast}:         (V)^{\mathbb{R}}_{\phi_1(\tau)}\oplus V^{\phi_1(\tau)}\longrightarrow
(W)^{\mathbb{R}}_\tau\oplus W^\tau  \mbox{,          } (v_1, v_2) \mapsto
(\phi_2(v_1), \phi_2(v_2)).\end{equation} It is equivariant in the
sense that
$$\phi_{2  \ast}([\phi_1(a), t]\cdot x)=[a, t]\cdot\phi_{2
\ast}(x)$$   for  any    $x\in (V)_{\phi_1(\tau)}^{\mathbb{R}}\oplus V^{\phi_1(\tau)}$, $[a,
t]\in\Lambda_{H'}(\tau)$,  and  $[\phi_1(a),
t]\in\Lambda_G(\phi_1(\tau)).$

Let $$\beta: S^{(V)^{\mathbb{R}}_{\phi_1(\tau)}}\longrightarrow E((V)_{\phi_1(\tau)}^{\mathbb{R}}\oplus
V^{\phi_1(\tau)})$$ be an element in $F_{\phi_1(\tau)}(G, V)$. We can
define $$F(\phi)_{\tau}: \mbox{Map}_{\mathbb{R}}(S^{(V)^{\mathbb{R}}_{\phi_1(\tau)}}, E((V)^{\mathbb{R}}_{\phi_1(\tau)}\oplus
V^{\phi_1(\tau)}))\longrightarrow \mbox{Map}_{\mathbb{R}}(S^{(W)^{\mathbb{R}}_\tau }, E
((W)_\tau^{\mathbb{R}}\oplus W^\tau))$$ with each $F(\phi)_{\tau}(\beta)$ defined by the composition:

$S^{(W)^{\mathbb{R}}_\tau} = S^{(W)^{\mathbb{R}}_\tau-(V)^{\mathbb{R}}_{\phi_1(\tau)}}\wedge S^{(V)^{\mathbb{R}}_{\phi_1(\tau)}}
\buildrel{Id\wedge \beta}\over\longrightarrow
S^{(W)_\tau^{\mathbb{R}}-(V)^{\mathbb{R}}_{\phi_1(\tau)}}\wedge E((V)_{\phi_1(\tau)}^{\mathbb{R}}\oplus
V^{\phi_1(\tau)})
\longrightarrow E((W)_{\tau}^{\mathbb{R}}\oplus V^{\phi_1(\tau)})
\buildrel{E(Id\oplus\phi_2)}\over\longrightarrow
E((W)_\tau^{\mathbb{R}}\oplus W^\tau)$

where the third map is the structure map of $E$ and
$Id\oplus\phi_2$ is the evident linear isometric embedding
$$(W)_{\tau}^{\mathbb{R}}\oplus V^{\phi_1(\tau)}
\longrightarrow (W)^{\mathbb{R}}_\tau\oplus W^\tau.$$ It is $\mathbb{R}-$linear.

It's straightforward to check for morphisms $$\begin{CD}(G, V) @>
{\phi=(\phi_1, \phi_2)}>> (H, W) @> {\psi=(\psi_1, \psi_2)}>> (K,
U)\end{CD}$$ So we have \begin{equation}F(\psi)_{\delta}\circ
F(\phi)_{\psi_1(\delta)}= F(\psi\circ\phi)_{\delta}\mbox{  for any  }\delta\in K\cap O(\phi_2\circ\psi_2(V)).\end{equation} and if $\phi$ is the identity map,
$F(\phi)_{\tau}$ is identity for each $\tau$.

$\phi_2$  gives  embeddings
$$Sym(V)\longrightarrow Sym(W)\mbox{        and         } Sym(V)^{\phi_1(\tau)}\longrightarrow
Sym(W)^\tau,$$ So we have well-defined
\begin{equation}S(\phi)_{\tau}: S(G, V)_{\phi_1(\tau)}\longrightarrow S(H, W)_{\tau}\mbox{,               }\end{equation} which is equivariant in the sense:
$$S(\phi)_{\tau}(\phi_1(\tau')\cdot y)=\tau'\cdot S(\phi)_{\tau}(y)\mbox{,     for  any    } \tau'\in C_{H'}(\tau).$$

Then we have the join $$F(\phi)_{\tau}\ast S(\phi)_{\tau}:
F_{\phi_1(\tau)}(G, V)\ast S(G, V)_{\phi_1(\tau)}\longrightarrow
F_\tau(H, W)\ast S(H, W)_{\tau}.$$

Let $$\phi_{\star}: QE_{n, \phi_1(\tau)}(G, V)\longrightarrow QE_{n, \tau}(H,
W)$$ denote the quotient of the restriction of $F(\phi)_{\tau}\ast S(\phi)_{\tau}$.

In addition, for each $\tau\in H^{n}_z$,
 each $f\in \mbox{Map}_{C_G(\phi_1(\tau))} (G, QE_{n, \phi_1(\tau)}(G, V))$, we can define
$$\widetilde{\phi_{\star}}(f):
C_{H}(\tau)\times_{C_{H'}(\tau)} H'\longrightarrow
\mbox{Map}_{C_H(\tau)} ( G, QE_{n, \tau}(H, W))$$ by
\begin{equation}\widetilde{\phi_{\star}}(f)([\alpha, \tau'])=\alpha\cdot (\phi_{\star}\circ f\circ\phi_1)(\tau')\end{equation} for each
$[\alpha, \tau']\in C_{H}(\tau)\times_{C_{H'}(\tau)} H'$.

Then we define $$QE_n(\phi)_\tau: \mbox{Map}_{C_G(\phi_1(\tau))} ( G,
QE_{n, \phi_1(\tau)}(G, V))\longrightarrow \mbox{Map}_{C_H(\tau)} ( H,
QE_{n, \tau}(H, W))$$ by
\begin{equation}QE_n(\phi)_\tau(f)(g):=\begin{cases}\widetilde{\phi_{\star}}(f)(g),&\text{if $g\in C_{H}(\tau)\times_{C_{H'}(\tau)}
H'$;}\\
c_0,&\text{otherwise.}\end{cases}\end{equation} where $c_0$
denotes the basepoint of $QE_{n, \tau}(H, W)$.

Finally, we define
\begin{equation}QE_n(\phi):=\prod\limits_{\tau\in H^{n}_z}QE_n(\phi)_\tau: QE_n(G, V)\longrightarrow QE_n(H, W).\end{equation} For
those $\sigma\in G^{n}_z$ not conjugate to any element in $H^n_z$, the factor $\mbox{Map}_{C_G(\sigma)} ( G,
QE_{n, \sigma}(G, V))$ has no effect on the image.

If $\phi$ is the identity map, $QE_n(\phi)$ is the identity map. If
$\phi: (G, V)\longrightarrow (H, W)$ and $\psi: (H,
W)\longrightarrow (K, U)$ are two morphisms in $D$,
we have
$$QE_n(\psi\circ\phi)=QE_n(\psi)\circ QE_n(\phi). $$ 
So
$QE_n$ defines a $D^W_0-$space.

\end{proof}

\begin{theorem}$QE_n$ is a $D_0-$FSP weakly representing $QE_n^*$. Especially, $QE_n$ is a global ring spectrum in $Sp_W^{D_0}$.\label{ERFSPoverS}\end{theorem}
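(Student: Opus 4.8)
The plan is to deduce the theorem from three inputs already in hand: Theorem~\ref{qellrd0}, which gives that $QE_n$ underlies a $D_0^W$-space; Theorem~\ref{igmain} (Theorem~5.18 of \cite{Huanquasisp}), which gives that for each compact Lie group $G$ the triple $(QE_n(G,-),\eta^{QE_n},\mu^{QE_n})$ is an $\mathcal{I}_G$-FSP weakly representing $QE_{n,G}^*$; and the associativity, unitality and equivariance of the auxiliary multiplications $\mu^F$ and $\mu^{QE_n}_{(\sigma,\tau)}$ recorded in Proposition~5.2 of \cite{Huanquasisp}. Recall that a $D_0$-FSP is a lax monoidal functor on $D_0$, equivalently a monoid in $Sp^{D_0}$ whose underlying $D_0$-spectrum (Definition~\ref{spd0}) has structure maps induced by unit and multiplication. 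The whole argument is the verbatim analogue of the proof of Theorem~7.1.5 in \cite{Huanthesis} for quasi-elliptic cohomology; I indicate only its shape.

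First I would promote the $D_0^W$-space $QE_n$ to a $D_0$-spectrum. The based $G$-space at $(G,V)$ is $QE_n(G,V)$, and the structure map $\sigma_{(G,V),(H,W)}\colon S^W\wedge QE_n(G,V)\to QE_n(G\times H,V\oplus W)$ is the composite of the twist $S^W\wedge QE_n(G,V)\cong QE_n(G,V)\wedge S^W$, the map $\mathrm{id}\wedge\eta^{QE_n}(H,W)$ into $QE_n(G,V)\wedge QE_n(H,W)$, and $\mu^{QE_n}((G,V),(H,W))$, using the unit of~(\ref{GetaQEll}), the sphere $D_0$-spectrum of Example~\ref{d0sphere}, and the symmetry of Proposition~\ref{d0symmetric}. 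The two axioms of Definition~\ref{spd0} — that $\sigma_{(G,V),(1,0)}$ is the identity and that iterated structure maps obey the coherence identity — involve only a single group at a time, hence follow from the corresponding $\mathcal{I}_G$-FSP identities in \cite{Huanquasisp}. That the resulting $D_0$-spectrum lies in $Sp_W^{D_0}$ (Definition~\ref{spd0w}) is exactly Theorem~\ref{qellrd0}: every restriction map $(G,V)\to(H,V)$ is carried to an $H$-weak equivalence of spaces.

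Second I would verify the ring structure, i.e.\ that $\eta^{QE_n}$ is a morphism $\mathbb{S}\to QE_n$ from the sphere of Example~\ref{d0sphere} and that $\mu^{QE_n}$ is an associative, unital morphism $QE_n\wedge QE_n\to QE_n$ in $Sp_W^{D_0}$ (commutative, if $E$ is). Naturality of $\eta^{QE_n}$ and $\mu^{QE_n}$ with respect to linear isometric embeddings is the $\mathcal{I}_G$-FSP compatibility of \cite{Huanquasisp}. The genuinely new point is naturality with respect to a restriction map $\phi=(\phi_1,\phi_2)\colon(G,V)\to(H,W)$: using the factorwise formula for $QE_n(\phi)$ from the proof of Theorem~\ref{qellrd0} — built over $\tau\in H^n_z$ from $F(\phi)_\tau$, $S(\phi)_\tau$, the join, the quotient $\phi_\star$ and the induction $\widetilde{\phi_\star}$ — this reduces to the compatibility of $\mu^F$ with the maps $F(\phi)_\tau$ (immediate from $\mu^F$ being a smash of structure maps of $E$) together with the behaviour of the inductions $\mathrm{Map}_{C_G(\sigma)}(G,-)$ under restriction of the indexing data along $\sigma\mapsto\phi_1(\sigma)$ and of the group along $\phi_1$. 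Associativity and unitality then hold factorwise by Proposition~5.2(i) of \cite{Huanquasisp}, so $QE_n$ is a monoid in $Sp_W^{D_0}$, i.e.\ a global ring spectrum.

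Weak representability is then immediate: by Theorem~\ref{igmain} each $QE_n(G,-)$ weakly represents $QE_{n,G}^*$ for $G$ finite, and by Definition~\ref{globalhtpygrp} the values of a $D_0^W$-spectrum on the objects $(G,V)$ carry all of its homotopical information, so $QE_n$ weakly represents $QE_n^*$ as an object of $Sp_W^{D_0}$. I expect the main obstacle to be the restriction-naturality of $\mu^{QE_n}$ just mentioned: one must identify, on one side, forming the product over $G^n_z\times(G')^n_z$ and then restricting to $H^n_z\times(H')^n_z$, and on the other side, restricting each factor first and then multiplying; the bookkeeping involves the splitting of the $\Lambda_{G\times G'}((\sigma,\sigma'))$-representation $(V\oplus V')^{\mathbb{R}}_{(\sigma,\sigma')}$ into summands coming from $(V)^{\mathbb{R}}_\sigma$ and $(V')^{\mathbb{R}}_{\sigma'}$, and the double-coset description of the induced maps on the $\mathrm{Map}_{C_G(\sigma)}(G,-)$ factors — but it introduces no idea beyond the quasi-elliptic case of \cite{Huanthesis}.
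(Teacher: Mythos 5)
Your proposal is correct and follows essentially the route the paper intends: the paper itself omits the argument, deferring to the analogous proof of Theorem 7.2.3 of \cite{Huanthesis}, and your sketch — assembling the $\mathcal{I}_G$-FSP data of Theorem \ref{igmain} into a $D_0$-spectrum via the unit and multiplication, invoking Theorem \ref{qellrd0} for membership in $Sp_W^{D_0}$, and isolating restriction-naturality of $\eta^{QE_n}$ and $\mu^{QE_n}$ as the only genuinely new verification — is a faithful reconstruction of that argument. You also correctly identify where the real bookkeeping lies (compatibility of the factorwise maps $\phi_\star$ and the inductions $\mathrm{Map}_{C_G(\sigma)}(G,-)$ with the multiplication), which is exactly the content the paper leaves to the reader.
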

The proof is analogous to the proof of Theorem 7.2.3 \cite{Huanthesis}. It is straightforward and left to the readers.

\begin{remark}Given a unitary global spectrum representing $E$, we can also construct a unitary $D_0^W-$spectrum representing $QE^*_n$ in the same way described in this section. The complex $\Lambda_G(\sigma)-$representations needed in the construction is constructed in Section A.1 \cite{Huanquasisp}.\end{remark}

\begin{remark}In \cite{SSST} Stapleton discussed the generalized Morava E-theories \cite{SSST} $$E_n^*(L^h(X/\!\!/G))$$ where $L$ is the $p-$adic loop functor $$L(X/\!\!/G):=Hom_{top.gpd}(\ast/\!\!/\mathbb{Z}_p, X/\!\!/G)\cong (\coprod_{\alpha\in Hom(\mathbb{Z}_p, G)} X^{im \alpha})/\!\!/ G,$$ and $h$ is a positive integer. In the case when $\{E^*_G(-)\}_G$ are equivariant cohomology theories satisfying the conditions at the beginning of
 Section \ref{explicitconstr}, we can construct a $\mathcal{I}_G-$FSP weakly representing each $E^*(L^h(X/\!\!/G))$. The construction is analogous to that of $QE_h(G, -)$ and contains many details and symbols. Theorem \ref{igmain} guarantees that  these $\mathcal{I}_G-$FSP together give a $D_0^W-$spectrum representing $E^*(L^h(-))$, which means it can be globalized in the almost global homotopy theory.

 If $E^0(B\mathbb{Z}/p)$ is the ring of functions on the $p-$torsion of the formal group $\mathbb{G}_E$ of $E^*(-)$, then the $p-$divisible group associated to $E^*(L^h(-))$ is
 $$\mathbb{G}_E\oplus (\mathbb{Q}_p/\mathbb{Z}_p)^h.$$
\label{gmoravasp}
\end{remark}

\begin{remark}In this paper we show that from one global ring spectrum $E$, we can construct infinitely many almost global spectrum $\{QE_n\}_{n\in \mathbb{N}}$. So there is a large family of examples for global homotopy theory. Moreover, in light of Remark \ref{gmoravasp}, I have the question  whether the conjecture is true that the globalness of a cohomology theory is completely determined the formal component of its divisible group and when the $\acute{e}$tale component of it varies the globalness does not change.\label{etaless}\end{remark}




\end{document}